\documentclass[11pt]{amsart}

\usepackage[T1]{fontenc}
\usepackage[utf8]{inputenc}
\usepackage{lmodern}
\usepackage[margin=1in]{geometry}
\usepackage{amsmath,amssymb,amsthm,mathtools}
\usepackage{enumitem}
\usepackage{microtype}
\usepackage[hidelinks]{hyperref}

\allowdisplaybreaks[2]
\setlist{topsep=2pt plus 1pt minus 1pt,itemsep=1pt plus 1pt,parsep=0pt,partopsep=0pt}
\makeatletter
\def\thm@space@setup{%
  \thm@preskip=6pt plus 2pt minus 2pt
  \thm@postskip=6pt plus 2pt minus 2pt
}
\makeatother

\hypersetup{
  pdftitle={On an Iwaniec--Kovalev--Onninen Conjecture for Harmonic Quasiconformal Annulus Mappings},
  pdfauthor={David Kalaj, Jinsong Liu, Jian-feng Zhu},
  pdfsubject={Harmonic quasiconformal annulus mappings}
}

\theoremstyle{plain}
\newtheorem{theorem}{Theorem}[section]
\newtheorem{lemma}[theorem]{Lemma}
\newtheorem{conjecture}[theorem]{Conjecture}
\newtheorem{corollary}[theorem]{Corollary}
\newtheorem{problem}[theorem]{Problem}

\theoremstyle{definition}
\newtheorem{example}[theorem]{Example}

\theoremstyle{remark}
\newtheorem{remark}[theorem]{Remark}
\newtheorem*{remark*}{Remark}

\newcommand{\C}{\mathbb{C}}
\newcommand{\Mod}{\operatorname{Mod}}

\newcommand{\onto}{\xrightarrow{\mathrm{onto}}}
\newcommand{\norm}[1]{\left\lVert #1 \right\rVert}

\makeatletter
\@namedef{subjclassname@2020}{\textup{2020} Mathematics Subject Classification}
\expandafter\let\expandafter\subjclassname\csname subjclassname@2020\endcsname
\makeatother

\title[On an Iwaniec--Kovalev--Onninen Conjecture]{On an Iwaniec--Kovalev--Onninen Conjecture\\
for harmonic quasiconformal annulus mappings}

\author[D. Kalaj]{David Kalaj}
\address{Faculty of Natural Sciences and Mathematics, University of Montenegro, Podgorica, Montenegro}
\email{davidkalaj@gmail.com}

\author[J. Liu]{Jinsong Liu}
\address{State Key Laboratory of Mathematical Sciences, AMSS, Chinese Academy of Sciences, Beijing 100190, China\\
School of Mathematical Sciences, University of Chinese Academy of Sciences, Beijing 100049, China}
\email{liujsong@math.ac.cn}

\author[J.-F. Zhu]{Jian-feng Zhu}
\address{Department of Mathematics, Shantou University, Shantou, Guangdong 515063, China}
\email{flandy@stu.edu.cn}

\date{May 2026}

\begin{document}
\flushbottom

\begin{abstract}
In 2012, Iwaniec, Kovalev and Onninen proposed an upper Nitsche--Gr\"otzsch type estimate for harmonic
$K$-quasiconformal homeomorphisms between circular annuli: in normalized form, every such map
$h:A(1,s)\to A(1,S)$ should satisfy
\[
S \leq \frac{K+1}{2}s-\frac{K-1}{2s}.
\]
The radial and spiral-radial one-mode models explain why this estimate is natural.

We show that this one-mode evidence does not extend to the unrestricted non-radial class. For
every $1<s<S$, we construct a non-radial harmonic orientation-preserving diffeomorphism
$h:A(1,s)\to A(1,S)$ with
\[
\norm{\omega_h}_{\infty}<\frac{S-s}{S-s^{-1}},
\]
strictly below the conjectural threshold. Thus, the dilatation lower bound predicted by the
conjecture fails, even for smooth harmonic diffeomorphisms with prescribed circular boundary
components. The construction uses a small high-frequency reparametrization of the outer
boundary: it lowers the first Fourier-mode dilatation by order $t^2$, while the compensating
high modes are exponentially damped at the inner boundary.

We also record structured regimes in which the one-mode estimate survives, including an
inner-boundary anti-conformal energy condition, a Fourier leakage criterion, and a low-frequency
spectral stability result. In the minimal-surface interpretation, the upper radial model is
helicoidal and vertical-periodic rather than single-valued catenoidal; accordingly, the
counterexamples give vertical-periodic minimal annuli with slope below the expected helicoidal
threshold, and lead to a non-radial extremal problem for harmonic annulus diffeomorphisms.
\end{abstract}

\subjclass[2020]{Primary 30C62; Secondary 31A05, 30C20, 30C75.}

\keywords{Harmonic mappings, quasiconformal mappings, annuli, Nitsche--Gr\"otzsch inequality,
harmonic diffeomorphisms, extremal quasiconformal mappings, minimal surfaces,
vertical-periodic minimal graphs.}
\maketitle

\section{Background and motivation}

The study of harmonic homeomorphisms between doubly connected planar domains sits at the
intersection of two classical distortion theories. On one side, quasiconformal mappings of annuli
obey Gr\"otzsch-type modulus distortion bounds. On the other side, harmonic homeomorphisms
between annuli obey Nitsche-type restrictions: a harmonic homeomorphism cannot map a round
annulus onto an arbitrarily thin round annulus. The work of Iwaniec, Kovalev and Onninen
connects these two themes by proving sharp estimates for harmonic quasiconformal mappings
between circular annuli, and by relating the extremal maps to doubly connected minimal surfaces
\cite{IKO2012}.
General background on planar harmonic mappings and their dilatations can be found in
\cite{ClunieSheilSmall1984,Duren2004,Lewy1936}; for quasiconformal mappings and annular
modulus distortion we refer to \cite{Ahlfors2006,AIM2009,Grotzsch1928,LehtoVirtanen1973}.
The annular Nitsche problem and its variants are treated in
\cite{AIM2010,IKO2010,IKO2011,Kalaj2005,Kalaj2016,Lyzzaik1999,Lyzzaik2001,Nitsche1962,Weitsman2001}.
For the minimal-surface background surrounding the Bj\"orling interpretation and catenoidal
model examples, see \cite{Dierkes1992,Nitsche1975,Osserman1986}.

For $0<r<R<\infty$, write
\[
A(r,R)=\{z\in\C:r<|z|<R\},\qquad
\Mod A(r,R)=\log \frac{R}{r}.
\]
Let
\[
h:A(r,R)\onto A(r^*,R^*)
\]
be an orientation-preserving harmonic homeomorphism. If $h$ is $K$-quasiconformal, then
\[
\norm{Dh(z)}^2\leq KJ(z,h), \quad \text{for a.e. } z,
\]
or equivalently
\[
|h_{\bar z}|\leq k |h_z|,\qquad k=\frac{K-1}{K+1}<1.
\]
Throughout the paper, a complex-valued harmonic map is written in the form
\[
h=f+\overline g,
\]
where $f$ and $g$ are holomorphic. Thus
\[
h_z=f',\qquad h_{\bar z}=\overline{g'},
\]
and the second complex dilatation is the holomorphic quotient
\[
\omega_h=\frac{g'}{f'},\qquad |\omega_h|=\frac{|h_{\bar z}|}{|h_z|}
\]
wherever $f'\ne0$. We shall always apply the maximum principle to this holomorphic second
dilatation, not to the anti-holomorphic quotient $h_{\bar z}/h_z$ itself.
The lower Nitsche--Gr\"otzsch estimate proved in \cite{IKO2012} says that, if
$h:A(r,R)\to A(r^*,R^*)$ is a $K$-quasiconformal harmonic homeomorphism, then
\begin{equation}\label{eq:lower-ng}
\frac{R^*}{r^*}\geq \frac{K+1}{2K}\frac{R}{r}+\frac{K-1}{2K}\frac{r}{R}.
\end{equation}
This estimate simultaneously refines the usual lower Gr\"otzsch distortion bound and Nitsche's
lower bound for harmonic annulus homeomorphisms. The extremal map for \eqref{eq:lower-ng} is radial:
\[
z\mapsto r^\ast\left(\frac{K+1}{2K}\frac{z}{r}+\frac{K-1}{2K}\frac{r}{\bar z}\right).
\]
Motivated by the sharpness and symmetry of this lower estimate, Iwaniec, Kovalev and Onninen
proposed the following upper analogue.

\begin{conjecture}[\cite{IKO2012}, Conjecture~1.8]
\label{conj:iko}
Let
\[
h:A(r,R)\onto A(r^*,R^*)
\]
be a $K$-quasiconformal harmonic homeomorphism. Then
\[
\frac{R^*}{r^*}
\leq \frac{K+1}{2}\frac{R}{r}-\frac{K-1}{2}\frac{r}{R}
\leq \left(\frac{R}{r}\right)^K.
\]
Equality in the first inequality is expected only for the radial map
\[
h(z)=r^*\left(\frac{K+1}{2}\frac{z}{r}-\frac{K-1}{2}\frac{r}{\bar z}\right),
\]
up to rotations and conformal automorphisms of the source annulus.
\end{conjecture}

We use the conjecture in precisely this unrestricted form: no radiality, one-mode, principal-map,
or zero-period minimal-surface hypothesis is imposed.

After scaling source and target, we shall use the normalized notation
\[
A=A(1,s),\qquad A^*=A(1,S),\qquad s>1,\quad S>1.
\]
Then Conjecture~\ref{conj:iko} becomes
\begin{equation}\label{eq:normalized-upper}
S\leq \Phi_K(s),\qquad
\Phi_K(s)=\frac{K+1}{2}s-\frac{K-1}{2s}.
\end{equation}
In terms of
\[
k=\frac{K-1}{K+1},
\]
that is
\begin{equation}\label{eq:k-form-upper}
S\leq \frac{s-k/s}{1-k}.
\end{equation}

If $S>s$, then \eqref{eq:k-form-upper} is equivalent to the lower bound
\begin{equation}\label{eq:kappa-bound}
k\geq \kappa(s,S),\qquad
\kappa(s,S):=\frac{S-s}{S-s^{-1}}.
\end{equation}
Indeed,
\[
S\leq \frac{s-k/s}{1-k}
\Longleftrightarrow (1-k)S\leq s-\frac{k}{s}
\Longleftrightarrow S-s\leq k\left(S-\frac{1}{s}\right).
\]
Thus, in the range $S>s$, the conjecture says that, every harmonic homeomorphism
$h:A(1,s)\to A(1,S)$ should satisfy the sharp dilatation lower bound
\begin{equation}\label{eq:dilatation-lower}
\norm{\omega_h}_{\infty}\geq \frac{S-s}{S-s^{-1}}.
\end{equation}
The radial map realizes equality. This is the key point of the framework adopted in this paper:
the conjecture is exactly what the canonical one-mode examples suggest. The proposed equality
case in \cite{IKO2012} is radial, and the principal harmonic maps and equality cases appearing
there are also one-mode maps. Thus, the conjecture was natural precisely, because every such
model points to the same expression.

The main point of this article is that this evidence does not control arbitrary non-radial harmonic
annulus maps. The bound \eqref{eq:dilatation-lower} is true in several structured classes, but
false in the full non-radial class. The counterexamples do not change the target annulus; the
only freedom used is the boundary correspondence on the outer circle.

The failure of the radial bound suggests a more natural extremal problem. For $1<s<S$, let
$\mathcal H(s,S)$ denote the class of harmonic orientation-preserving diffeomorphisms from
$A(1,s)$ onto $A(1,S)$. Put
\[
\mathcal H_{qc}(s,S)=\{h\in\mathcal H(s,S):\|\omega_h\|_\infty<1\}
\]
and define
\[
q_*(s,S)=\inf_{h\in\mathcal H_{qc}(s,S)}\|\omega_h\|_\infty,
\qquad
K_*(s,S)=\frac{1+q_*(s,S)}{1-q_*(s,S)}.
\]
The radial upper model gives the value $\kappa(s,S)$, whereas
Theorem~\ref{thm:counterexamples} gives $q_*(s,S)<\kappa(s,S)$. Thus, any genuine harmonic
quasiconformal extremal attaining $q_*(s,S)$, if such an extremal exists, must be non-radial.
Describing such extremal maps and their minimal-surface lifts is the natural replacement for the
original radial conjecture.

\begin{remark}[The range $S\leq s$]
The upper estimate \eqref{eq:normalized-upper} is automatic when $S\leq s$, because
\[
\Phi_K(s)-s=\frac{K-1}{2}\left(s-\frac{1}{s}\right)\geq 0.
\]
However, for $S<s$ harmonic diffeomorphisms do not exist for all parameters. The Nitsche
conjecture, proved by Iwaniec--Kovalev--Onninen \cite{IKO2011}, says that a harmonic
homeomorphism $A(1,s)\to A(1,S)$ can exist only if
\[
S\geq \frac12\left(s+\frac{1}{s}\right),
\]
and this bound is sharp. Hence the genuinely restrictive part of \cite[Conjecture 1.8]{IKO2012}
lies in the thickening range $1<s<S$.
\end{remark}

We close the introduction by describing the organization of the paper.
Section~\ref{sec2} recalls the radial equality model and fixes the normalization
against which the later constructions are measured. Section~\ref{sec3} proves the
main negative result by constructing, for each pair \(1<s<S\), a high-frequency
non-radial harmonic diffeomorphism whose maximal complex dilatation is strictly
below the conjectural threshold. Section~\ref{sec4} isolates several regimes, in which
the one-mode mechanism still forces the conjectured estimate, including an
inner-boundary energy criterion, a Fourier spectral criterion, and a
low-frequency stability theorem. Section~\ref{sec:minimal} develops the
minimal-surface interpretation: the lift criterion, the slope formula, the
catenoidal/helicoidal sign distinction, the vertical-periodic lift of the
counterexamples, and a perturbative zero-period theorem for single-valued minimal
graphs. Section~\ref{sec:conclusion} summarizes the mechanism behind the failure, formulates the
resulting non-radial extremal problem, and separates the planar, periodic, and zero-period
problems. Appendix~\ref{Apx} records the
numerical details for the explicit example with \((s,S)=(2,3)\).

\section{The known radial model case}\label{sec2}

We first recall the radial model case, which is the normalized equality candidate in
\cite[Conjecture 1.8]{IKO2012}. The purpose of this section is only to fix the
calibration for later comparison.

\begin{lemma}
\label{lem:radial}
Let
$h:A(1,s)\onto A(1,S)$
be a radial orientation-preserving $K$-quasiconformal harmonic homeomorphism. Then
\begin{equation}\label{eq:radial-model-bound}
S\leq \frac{K+1}{2}s-\frac{K-1}{2s}.
\end{equation}
Equality holds, up to rotation, precisely for
\begin{equation}\label{eq:radial-extremal}
h(z)=\frac{K+1}{2}z-\frac{K-1}{2}\frac{1}{\bar z}.
\end{equation}
\end{lemma}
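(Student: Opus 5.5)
We first classify the radial harmonic maps. We take "radial" to mean $h(z)=\rho(|z|)\,z/|z|$ up to a rotation of the target. Then $h$ harmonic forces $\rho$ to solve the Euler equation $\rho''+\rho'/r-\rho/r^2=0$, whose solutions are $\rho(r)=ar+b/r$. Hence
\[
h(z)=az+\frac{b}{\bar z},\qquad f(z)=az,\quad g(z)=\bar b\log z \text{ (locally)},
\]
with $a,b$ real. The boundary conditions $\rho(1)=1$ and $\rho(s)=S$ (outer circle to outer circle, since $h$ is an orientation-preserving homeomorphism of annuli) give $a+b=1$ and $as+b/s=S$. Solving,
\[
a=\frac{S-s^{-1}}{s-s^{-1}},\qquad b=\frac{s-S}{s-s^{-1}}.
\]

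Next we compute the dilatation. Here $h_z=a$ and $h_{\bar z}=-b/\bar z^{2}$, so the second dilatation is $\omega_h=g'/f'=\bar b/(az)$, with $|\omega_h(z)|=|b|/(a|z|)$. Orientation preservation and the homeomorphism property require $a>0$ and $|b|<a|z|^2$ on the annulus. The supremum is attained on the inner circle $|z|=1$, so $\norm{\omega_h}_\infty=|b|/a$. For $h$ to be $K$-quasiconformal we need $|b|/a\le k=(K-1)/(K+1)$.

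We then treat two cases. If $S\le s$, the inequality \eqref{eq:radial-model-bound} is automatic, by the computation $\Phi_K(s)-s\ge0$ in the remark on the range $S\le s$. In that case equality would force $K=1$ and $S=s$, so $h=z$, which agrees with \eqref{eq:radial-extremal} at $K=1$. If $S>s$, then $b<0$ and
\[
\frac{|b|}{a}=\frac{S-s}{S-s^{-1}}=\kappa(s,S)\le k .
\]
By the equivalence chain preceding \eqref{eq:kappa-bound}, this is exactly $S\le \Phi_K(s)$.

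For the equality case, equality forces $\kappa(s,S)=k$. Then $a=(1-b)$ with $b=-k a$, so $a=1/(1-k)=(K+1)/2$ and $b=-(K-1)/2$, which is \eqref{eq:radial-extremal}. The only potential subtlety is pinning down the precise notion of "radial": whether rotations or the inversion-type maps $z\mapsto \rho(|z|)\,z/|z|$ composed with rotations are allowed. This is handled by absorbing the rotation into the target, which is how the uniqueness statement "up to rotation" arises.
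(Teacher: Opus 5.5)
Your proposal is correct and follows the paper's argument: the Euler equation gives $h=az+b/\bar z$, $|\omega_h|$ is maximized on $|z|=1$ with value $|b|/a$, and the split by the sign of $b$ (your $S\le s$ versus $S>s$) does the rest, the only difference being that you compute $|b|/a=\kappa(s,S)$ in closed form and invoke the introduction's equivalence with $k\ge\kappa(s,S)$, whereas the paper substitutes $\beta\le (K-1)/2$ directly into $S=s+\beta(s-s^{-1})$. One slip to correct: $\overline{g}=b/\bar z$ gives $g(z)=\bar b/z$, not $\bar b\log z$, so $\omega_h=-\bar b/(az^2)$ and $|\omega_h|=|b|/(a|z|^2)$, which agrees with your own $h_{\bar z}=-b/\bar z^{2}$; since this is still maximized on $|z|=1$, nothing downstream changes.
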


\begin{proof}
After a rotation of the target, a radial degree-one harmonic map has the form
\[
h(\rho e^{i\theta})=H(\rho)e^{i\theta},\qquad H(1)=1,\quad H(s)=S.
\]
The harmonic equation gives
\[
H''(\rho)+\frac{1}{\rho}H'(\rho)-\frac{1}{\rho^2}H(\rho)=0,
\]
thus, $H(\rho)=a\rho+b/\rho.$
Equivalently,
\[
h(z)=az+\frac{b}{\bar z}.
\]
The inner and outer boundary conditions are
$a+b=1$ and $as+b/s=S.$
Moreover,
$h_z=a$ and $h_{\bar z}=-b/\bar{z}^2.$
Since the map is orientation preserving, we have $a>0$. The $K$-quasiconformal condition is therefore
\[
\frac{|b|}{a|z|^2}\leq k,
\qquad
k=\frac{K-1}{K+1}.
\]
The maximum occurs on $|z|=1$, because $|z|^{-2}$ decreases on $1\le |z|\le s$. Hence
\begin{equation}\label{eq:radial-b-bound}
|b|\leq ka.
\end{equation}
If $b\geq0$, then $a=1-b$ and
\[
S=(1-b)s+\frac{b}{s}=s-b\left(s-\frac{1}{s}\right)\leq s\leq \Phi_K(s).
\]
If $b<0$, write $b=-\beta$, $\beta>0$. Then $a=1+\beta$, and \eqref{eq:radial-b-bound} gives
\[
\beta\leq k(1+\beta),
\qquad
\beta\leq \frac{k}{1-k}=\frac{K-1}{2}.
\]
Consequently
\[
S=(1+\beta)s-\frac{\beta}{s}
=s+\beta\left(s-\frac{1}{s}\right)
\leq s+\frac{K-1}{2}\left(s-\frac{1}{s}\right),
\]
which is exactly \eqref{eq:radial-model-bound}. Equality requires $\beta=(K-1)/2$ and $a=(K+1)/2$, which gives \eqref{eq:radial-extremal}.
\end{proof}

\begin{remark}
For the equality candidate
\[
H_K(z)=\frac{K+1}{2}z-\frac{K-1}{2\bar z},
\]
one has
\[
H_K(\rho e^{i\theta})=\left(\frac{K+1}{2}\rho-
\frac{K-1}{2\rho}\right)e^{i\theta},
\]
then $|H_K(e^{i\theta})|=1$ and $|H_K(se^{i\theta})|=\Phi_K(s)$. Moreover, one has
\[
\frac{|(H_K)_{\bar z}|}{|(H_K)_z|}
=\frac{K-1}{K+1}\frac{1}{|z|^2}
\leq \frac{K-1}{K+1}.
\]
Thus, the constant in the radial model case is sharp. This is the normalized equality
candidate stated with Conjecture~1.8 in \cite{IKO2012}, so Lemma~\ref{lem:radial} should be
read as a calibration rather than as a new theorem.
\end{remark}

\section{A general family of non-radial counterexamples}\label{sec3}
\label{sec:counterexamples}

We now show that Conjecture~\ref{conj:iko} fails in the full non-radial class. The construction is useful
because it exposes the exact mechanism of failure: we lower the radial first-mode dilatation by
decreasing the first Fourier coefficient $J_0(t)$, while the high-frequency modes needed to keep
the outer radius equal to $S$ are almost invisible on the inner boundary.

At the heuristic level, the outer boundary map
$e^{i\theta}\mapsto e^{i(\theta+t\sin m\theta)}$ changes the first Fourier coefficient from
$S$ to $SJ_0(t)=S-St^2/4+O(t^4)$. This reduces the radial
anti-conformal-to-conformal ratio by a fixed multiple of $t^2$. The compensating modes have
frequencies $1+mj$, $j\ne0$, and their trace at the inner circle is suppressed by the factor
$s^{-|1+mj|}$. Choosing $m\asymp \log(1/t)$ makes this leakage $o(t^2)$ and leaves a strict
margin below the threshold $\kappa(s,S)$.

We shall use two elementary lemmas to keep the analytic and topological parts of the construction
separate.

\begin{lemma}\label{lem:dilatation-maximum}
Let $h=f+\overline g$ be harmonic in $A(1,s)$ and $C^1$ on the closed annulus, with $f$ and $g$
holomorphic. Put
\[
P(z)=2zf'(z),\qquad G(z)=2zg'(z).
\]
If $P$ has no zeros on the closed annulus, then $\omega_h=g'/f'=G/P$
is holomorphic in $A(1,s)$ and continuous on its closure. Hence
\[
\|\omega_h\|_\infty=\max_{\partial A(1,s)}|\omega_h|.
\]
Moreover, on each circle $|z|=\rho$,
\[
|\omega_h|
=
\frac{|G|}{|P|}
=
\frac{|\rho h_\rho+i h_\theta|}{|\rho h_\rho-i h_\theta|}.
\]
\end{lemma}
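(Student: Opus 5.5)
The proof has three parts: holomorphy and continuity of $\omega_h$, the maximum principle, and the polar-coordinate formula.

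\emph{Holomorphy and continuity.} Since $h$ is $C^1$ on the closed annulus, $h_z=f'$ and $h_{\bar z}=\overline{g'}$ extend continuously to $\overline{A(1,s)}$. Hence $f'$ and $g'$ extend continuously to the closure, and so do $P=2zf'$ and $G=2zg'$. Because $P$ has no zeros on the closed annulus and $z\neq 0$ there, $f'$ is zero-free as well. It follows that $\omega_h=g'/f'=(2zg')/(2zf')=G/P$ is holomorphic in $A(1,s)$ and continuous on $\overline{A(1,s)}$.

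\emph{Maximum principle.} The maximum modulus principle for holomorphic functions continuous up to the boundary of a bounded domain gives $\sup_{A(1,s)}|\omega_h|=\max_{\partial A(1,s)}|\omega_h|$. The maximum on the compact set $\partial A(1,s)$ is attained, and the $L^\infty$ norm equals the supremum by continuity. This is exactly where the paper's convention of applying the maximum principle to the holomorphic quotient $g'/f'$ is used.

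\emph{Polar formula.} Write $z=\rho e^{i\theta}$. The chain rule gives
\[
\rho h_\rho=z h_z+\bar z h_{\bar z},\qquad h_\theta=i\,(z h_z-\bar z h_{\bar z}).
\]
Hence
\[
\rho h_\rho - i h_\theta = 2z h_z = 2zf' = P,
\qquad
\rho h_\rho + i h_\theta = 2\bar z h_{\bar z} = 2\,\overline{z g'} = \overline{G}.
\]
Taking moduli yields $|G|/|P|=|\rho h_\rho+ih_\theta|/|\rho h_\rho-ih_\theta|$, which equals $|\omega_h|$. These identities also hold on the boundary circles by continuity of the first derivatives.

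The only point needing care is the continuity of $f'$ and $g'$ up to the boundary. It follows directly from the $C^1$ hypothesis via $f'=h_z$ and $g'=\overline{h_{\bar z}}$, so no real obstacle arises; the rest is a routine chain-rule computation.
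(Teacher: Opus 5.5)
Your proposal is correct and follows essentially the same route as the paper: identify $P=2zh_z=\rho h_\rho-ih_\theta$ and $\overline{G}=2\bar z h_{\bar z}=\rho h_\rho+ih_\theta$, observe that $G/P$ is holomorphic in the annulus and continuous up to the boundary since $P\neq 0$, and apply the maximum principle. You additionally write out the chain-rule computation and the boundary continuity of $f'$ and $g'$, which the paper only states.
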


\begin{proof}
The convention $h=f+\overline g$ gives $h_z=f'$ and $h_{\bar z}=\overline{g'}$. Also,
\[
2zh_z=\rho h_\rho-i h_\theta=P,
\qquad
2\bar z h_{\bar z}=\rho h_\rho+i h_\theta=\overline{G}.
\]
Thus $|h_{\bar z}|/|h_z|=|G|/|P|$; the displayed polar formula is the same identity with
$\rho h_\rho+i h_\theta=\overline{G(z)}$. Since $P$ is non-vanishing, $G/P$ is holomorphic in
the annulus and continuous up to the boundary. The maximum principle on the annulus gives the
asserted boundary maximum.
\end{proof}

\begin{lemma}\label{lem:annulus-degree}
Let $h$ be continuous on $\overline{A(1,s)}$ and $C^1$ in $A(1,s)$. Assume that $J_h>0$ in the
annulus, that $h$ maps $|z|=1$ homeomorphically onto $|w|=1$ with degree one, and that $h$ maps
$|z|=s$ homeomorphically onto $|w|=S$ with degree one. Then
\[
h:A(1,s)\to A(1,S)
\]
is a diffeomorphism, provided the two boundary components have the standard annular orientation.
\end{lemma}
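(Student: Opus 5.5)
We prove Lemma~\ref{lem:annulus-degree} by a degree argument. Let $\Omega=A(1,s)$, $\Omega^*=A(1,S)$, and regard $h$ as a continuous map of the closed annulus $\overline\Omega$ into $\C$. The first step is to show $h(\Omega)\subset\Omega^*$. Since $J_h>0$ in $\Omega$, the inverse function theorem makes $h|_\Omega$ a local diffeomorphism, and hence an open map. Consequently $h(\Omega)$ is open, and $\partial h(\Omega)\subset h(\partial\Omega)=\{|w|=1\}\cup\{|w|=S\}$ by compactness of $\overline\Omega$. The set $h(\Omega)$ is also connected. Any connected open set whose boundary lies in the two circles is one of $\{|w|<1\}$, $\Omega^*$, $\{|w|>S\}$, or the interior of a union of these pieces together with parts of the circles. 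Bounded images and compactness rule out the unbounded component $\{|w|>S\}$, which leaves $h(\Omega)$ inside either $\{|w|<S\}$ or $\Omega^*$.

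To decide between these, we use the winding number (Brouwer degree) of $h$ relative to a target point $w_0\notin h(\partial\Omega)$. With the standard annular orientation, $\partial\Omega$ is the outer circle counterclockwise minus the inner circle. Thus
\[
\deg(h,\Omega,w_0)=\operatorname{ind}(h(s e^{i\theta}),w_0)-\operatorname{ind}(h(e^{i\theta}),w_0).
\]
Both boundary maps have degree one, so this degree equals $1-1=0$ when $|w_0|<1$, equals $1-0=1$ when $1<|w_0|<S$, and equals $0$ when $|w_0|>S$.

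The second step converts this into injectivity and surjectivity. Since $J_h>0$, every preimage of a regular value $w_0$ contributes $+1$. The degree is therefore the number of preimages: $\#h^{-1}(w_0)\cap\Omega=\deg(h,\Omega,w_0)$ for every $w_0\notin h(\partial\Omega)$. Every point off the two circles is a regular value, because there are no critical points in $\Omega$. Hence points with $|w_0|<1$ or $|w_0|>S$ have no preimages, and every point of $\Omega^*$ has exactly one preimage.

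It remains to exclude $h(z)$ lying on one of the circles for some $z\in\Omega$. If that happened, openness of $h$ at $z$ would force a neighborhood of $h(z)$, which meets $\{|w|<1\}$ or $\{|w|>S\}$, to lie in $h(\Omega)$. This contradicts the count just obtained. Hence $h:\Omega\to\Omega^*$ is a bijection and a local diffeomorphism, so it is a diffeomorphism. Its inverse is $C^1$ by the inverse function theorem.

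The main obstacle is making the degree formula rigorous when $h$ is only continuous up to $\partial\Omega$ and $C^1$ inside. We handle it by working on slightly smaller annuli $A(1+\varepsilon,s-\varepsilon)$, where $h$ is $C^1$ up to the boundary. By uniform continuity, the images of the circles $|z|=1+\varepsilon$ and $|z|=s-\varepsilon$ are uniformly close to the boundary curves. So for fixed $w_0$ off the two target circles and small $\varepsilon$, the winding numbers equal those of the limiting boundary maps. The argument principle on the smaller annulus, $\frac{1}{2\pi i}\oint d\log(h-w_0)$ summed over the preimages, each counted with index $\operatorname{sign}J_h=+1$, then gives the preimage count on $A(1+\varepsilon,s-\varepsilon)$. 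Letting $\varepsilon\to0$ yields the count on all of $\Omega$.
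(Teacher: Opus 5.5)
Your proof is correct and follows the paper's argument. Both compute the Brouwer degree from the boundary winding numbers, which gives $0,1,0$ on the three regions, and both use $J_h>0$ to identify the degree with the number of preimages. Both then rule out interior points landing on the target circles by openness. Your preliminary step about the open connected set $h(\Omega)$ is not needed once the preimage count is in hand. Your shrinking-annulus argument just spells out the standard degree theory that the paper invokes implicitly for a map that is only continuous up to the boundary.
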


\begin{proof}
Let $A=A(1,s)$. For $w\notin h(\partial A)$, the Brouwer degree of $h$ at $w$ is the winding number
of the image of $\partial A$ around $w$, with the outer boundary positively oriented and the inner
boundary negatively oriented. If $1<|w|<S$, the outer image circle winds once around $w$ and the
inner image circle does not wind around $w$, so $\deg(h,A,w)=1$. If $|w|<1$, the two boundary
circles both wind once around $w$ but with opposite orientations, so the degree is zero. If
$|w|>S$, both winding numbers are zero.

Since $J_h>0$, every interior preimage contributes local degree $+1$. Hence every point of
$A(1,S)$ has exactly one preimage, while no point in $\C\setminus\overline{A(1,S)}$ has a preimage.
No interior point can map to either boundary circle: otherwise the openness of the local
diffeomorphism would force nearby image points on both sides of that circle, including points whose
degree is zero. Thus $h(A(1,s))=A(1,S)$ and $h$ is one-to-one. The inverse function theorem then
shows that $h$ is a diffeomorphism.
\end{proof}

\begin{theorem}
\label{thm:counterexamples}
Fix $1<s<S$ and set
\[
\kappa=\kappa(s,S)=\frac{S-s}{S-s^{-1}}.
\]
Then, there exists a non-radial harmonic orientation-preserving diffeomorphism $h:A(1,s)\rightarrow A(1,S)$
such that
\[
\norm{\omega_h}_{\infty}<\kappa.
\]
Consequently, the conjectured upper estimate \eqref{eq:normalized-upper} is false in the full non-radial class for every
pair $1<s<S$.
\end{theorem}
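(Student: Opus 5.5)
The plan is to build $h$ as the harmonic extension of explicit boundary data. On the inner circle the data is the identity, and on the outer circle it is a small high-frequency reparametrization:
\[
h(e^{i\theta})=e^{i\theta},\qquad h(se^{i\theta})=S\,e^{i(\theta+t\sin m\theta)},
\]
with $t>0$ small and $m\asymp\log(1/t)$ chosen later. The outer boundary map is a degree-one homeomorphism of $\{|w|=S\}$ as soon as $tm<1$, because then $\theta\mapsto\theta+t\sin m\theta$ is strictly increasing. The Jacobi--Anger expansion gives
\[
Se^{i\theta}e^{it\sin m\theta}=S\sum_{j\in\mathbb Z}J_j(t)e^{i(1+mj)\theta}.
\]
So the harmonic extension can be written mode by mode:
\[
h(\rho e^{i\theta})=\sum_n \bigl(a_n\rho^n+b_n\rho^{-n}\bigr)e^{in\theta},
\]
where only $n=1+mj$ occurs. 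The boundary conditions are $a_n+b_n=\delta_{n1}$ and $a_ns^n+b_ns^{-n}=SJ_j(t)$, and both determine $a_n,b_n$ explicitly.

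\textbf{Step 1: the radial part.} The $n=1$ mode is exactly the radial map of Lemma~\ref{lem:radial}, but with outer radius $S_t:=SJ_0(t)=S-St^2/4+O(t^4)$. For $t$ small we still have $S_t>s$, so $b_1<0$. The radial dilatation on $|z|=1$ is then
\[
\frac{|b_1|}{a_1}=\kappa(s,S_t).
\]
Since $\kappa(s,\cdot)$ is strictly increasing with positive derivative $(s-s^{-1})/(S-s^{-1})^2$, this equals $\kappa(s,S)-c\,t^2+O(t^4)$ with $c>0$ depending only on $s,S$. On $|z|=s$ the radial dilatation is smaller: it is $\kappa(s,S_t)/s^2$, which sits a fixed distance below $\kappa$.

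\textbf{Step 2: the error modes.} For $j\neq0$ and $n=1+mj$, a direct computation shows two things. On the inner circle, the contributions of these modes to $P=\rho h_\rho-ih_\theta$ and to $\overline G=\rho h_\rho+ih_\theta$ are bounded by $C|n|\,|J_j(t)|\,s^{-|n|}$. This holds because the inner datum vanishes for these modes, so $a_n$ and $b_n$ are of size $|J_j|/|s^n-s^{-n}|$. On the outer circle the contributions are bounded by $C|n|\,|J_j(t)|$. Using $|J_j(t)|\le (t/2)^{|j|}/|j|!$, the outer total is $O(tm)$. The inner total is
\[
O\bigl(t\,m\,s^{-(m-1)}\bigr),
\]
which is $o(t^2)$ once $m\ge C\log(1/t)$ with $C>2/\log s$. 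Taking, say, $m=\lceil C\log(1/t)\rceil$ makes both $tm\to0$ and the inner leakage $o(t^2)$.

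\textbf{Step 3: conclusion.} From Steps 1 and 2, $P$ is a uniformly small perturbation of the nonvanishing radial quantity $2a_1z$ on the closed annulus. Hence $P\neq0$, and Lemma~\ref{lem:dilatation-maximum} reduces $\|\omega_h\|_\infty$ to the boundary. On $|z|=1$ we get $|G|/|P|\le \kappa-ct^2+o(t^2)$, and on $|z|=s$ we get $|G|/|P|\le \kappa/s^2+O(tm)$. Both are $<\kappa$ for small $t$. In particular $|\omega_h|<1$, so $J_h=|f'|^2(1-|\omega_h|^2)>0$. Together with the degree-one boundary homeomorphisms, Lemma~\ref{lem:annulus-degree} shows that $h$ is a diffeomorphism onto $A(1,S)$. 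The map is non-radial because $J_{\pm1}(t)\neq0$.

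\textbf{Main obstacle.} The delicate step is the inner-circle estimate in Step 2. There the radial margin is only of order $t^2$, so the leakage bound, including the factor $|n|$ coming from the derivatives and the sum over all $j$, must be made uniform and genuinely $o(t^2)$. I would first bound the $j=\pm1$ terms explicitly and then dominate the tail $|j|\ge2$ by a geometric series in $t\,s^{-m}$.
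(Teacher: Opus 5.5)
Your proposal is correct and follows the paper's proof essentially step for step. It uses the same boundary data $Se^{i(\theta+t\sin m\theta)}$, the Jacobi--Anger splitting with the $t^2$ drop of $\kappa(s,SJ_0(t))$, the $|n|s^{-|n|}$ damping of the modes $n=1+mj$ at the inner circle with $m\asymp\log(1/t)$, and then the boundary maximum principle for $\omega_h$ together with the degree lemma. The one step you leave implicit is the bound $|P-P_0|=O(mt)$ on the whole closed annulus rather than only on the two circles. It follows at once either from the identity $\rho V_p'(\rho)+pV_p(\rho)=2p\rho^p/(s^p-s^{-p})$, as in the paper, or from the maximum principle applied to the holomorphic function $P-P_0$.
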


\begin{proof}
The proof has three parts. First, we compute the strict $t^2$ decrease in the first Fourier mode.
Second, we show that the high-frequency modes contribute only $o(t^2)$ on the inner boundary and
remain harmless on the outer boundary. Finally, we use the boundary maximum principle for
$\omega_h$ and the degree lemma to obtain a global diffeomorphism.

Let $0<t<1$ be small, and let $m\in\mathbb{N}$, $m\geq 2$.
The precise choice of $m=m(t)$ will be made below. Prescribe
\begin{equation}\label{eq:counterexample-boundary-data}
h(e^{i\theta})=e^{i\theta},
\qquad
h(se^{i\theta})=Se^{i(\theta+t\sin m\theta)}.
\end{equation}
If $mt<1$, then the outer boundary map is orientation preserving because
\[
\frac{\mathrm{d}}{\mathrm{d}\theta}(\theta+t\sin m\theta)
=
1+mt\cos m\theta
\geq 1-mt>0.
\]
By the Jacobi--Anger expansion \cite[Chapter~3]{Watson1944} (or
\cite[p.~31]{ColtonKress1992}), we obtain
\[
e^{it\sin m\theta}
=
\sum_{j\in\mathbb{Z}}J_j(t)e^{ijm\theta},
\]
where $J_j$ is the Bessel function of the first kind. Hence
\begin{equation}\label{eq:outer-fourier}
Se^{i(\theta+t\sin m\theta)}
=
\sum_{j\in\mathbb{Z}}c_j e^{in_j\theta},
\qquad
c_j=SJ_j(t),\quad n_j=1+mj.
\end{equation}
For $j\neq0$, put
\[
p_j=|n_j|,
\qquad
V_p(\rho)=\frac{\rho^p-\rho^{-p}}{s^p-s^{-p}}.
\]
Then $V_p(1)=0$ and $V_p(s)=1$. The first Fourier mode is written as
\[
\left(a\rho+\frac{b}{\rho}\right)e^{i\theta},
\]
where
$a+b=1$ and $as+b/s=SJ_0(t)$.
Thus
\[
a(t)=\frac{SJ_0(t)-s^{-1}}{s-s^{-1}},
\qquad
b(t)=\frac{s-SJ_0(t)}{s-s^{-1}}.
\]
Since $S>s$ and $J_0(t)\to1$ as $t\to0$, for all sufficiently small
$t>0$ we have $SJ_0(t)>s$. Hence, $b(t)<0$. Set
\[
\beta(t)=-b(t)
=
\frac{SJ_0(t)-s}{s-s^{-1}}>0.
\]
Since harmonic functions on an annulus separate into Fourier modes, and since
the non-first modes vanish on the inner boundary and have boundary coefficient
$c_j$ on $|z|=s$, their radial factors are
\[
V_{p_j}(\rho)
=
\frac{\rho^{p_j}-\rho^{-p_j}}{s^{p_j}-s^{-p_j}},
\qquad
p_j=|n_j|.
\]
Thus, the harmonic extension of \eqref{eq:counterexample-boundary-data} is
\[
h(\rho e^{i\theta})
=
\left(a\rho-\frac{\beta}{\rho}\right)e^{i\theta}
+
\sum_{j\neq0}c_jV_{p_j}(\rho)e^{in_j\theta}.
\]
The series converges absolutely and uniformly with all derivatives on
$1\leq \rho\leq s$, because the Bessel coefficients decay factorially.

Let $h=f+\overline g$ and set
\[
P(z)=2zh_z(z)=2zf'(z),
\qquad
G(z)=2zg'(z).
\]
In polar coordinates,
\[
P=\rho h_\rho-ih_\theta,
\qquad
2\bar z h_{\bar z}=\rho h_\rho+ih_\theta.
\]
Since $2\bar z h_{\bar z}=\overline{G(z)}$, we have
\[
|\omega_h(z)|
=
\frac{|g'(z)|}{|f'(z)|}
=
\frac{|G(z)|}{|P(z)|}
=
\frac{|\rho h_\rho+ih_\theta|}
{|\rho h_\rho-ih_\theta|}.
\]

For the first mode, we have
\[
P_0(\rho e^{i\theta})=2a\rho e^{i\theta},
\qquad
(\rho h_\rho+ih_\theta)_0=\frac{2\beta}{\rho}e^{i\theta}.
\]
Thus, the first-mode complex dilatation is
\[
\omega_0(z,t)
=
\frac{g_0'(z,t)}{f_0'(z,t)}
=
\frac{\beta(t)}{a(t)}\frac{1}{z^2}.
\]
We write
\[
q_0(t):=\frac{\beta(t)}{a(t)}
=
\frac{SJ_0(t)-s}{SJ_0(t)-s^{-1}}.
\]
Then $|\omega_0(\rho e^{i\theta},t)|=q_0(t)/\rho^2$,
and therefore
\[
\sup_{1\leq \rho\leq s}
|\omega_0(\rho e^{i\theta},t)|
=
q_0(t).
\]

Using
\[
J_0(t)=1-\frac{t^2}{4}+O(t^4),
\]
we obtain
\begin{equation}\label{eq:q0-expansion}
q_0(t)
=
\kappa
-
\frac{S(s-s^{-1})}{4(S-s^{-1})^2}t^2
+
O(t^4),
\end{equation}
where
\[
\kappa=q_0(0)=\frac{S-s}{S-s^{-1}}.
\]
Hence, the first Fourier mode alone has
\[
\sup_{1\leq \rho\leq s}
|\omega_0(\rho e^{i\theta},t)|
=
q_0(t)<\kappa
\]
for all sufficiently small $t>0$.

It remains to show that the non-first modes do not destroy this strict
inequality. For a single non-first mode $V_p(\rho)e^{in\theta}$, $p=|n|$,
its contribution to $P=\rho h_\rho-ih_\theta$
is $\left(\rho V_p'(\rho)+nV_p(\rho)\right)e^{in\theta}$
and its contribution to
$Q:=\rho h_\rho+ih_\theta$
is $\left(\rho V_p'(\rho)-nV_p(\rho)\right)e^{in\theta}$.
Write
\[
P=P_0+P_{\mathrm{pert}},
\qquad
Q=Q_0+Q_{\mathrm{pert}}.
\]
For the first mode, one has $P_0(\rho e^{i\theta})=2a\rho e^{i\theta}$ and $Q_0(\rho e^{i\theta})=\frac{2\beta}{\rho}e^{i\theta}$.
For \(j\neq0\), the \(n_j\)-th Fourier component of the harmonic extension is
\[
c_j V_{p_j}(\rho)e^{i n_j\theta},\qquad p_j=|n_j|.
\]
its contribution to $P$ is
\[c_j\left(\rho V_{p_j}'(\rho)+n_jV_{p_j}(\rho)\right)
e^{in_j\theta}\]
and its contribution to $Q$ is
\[c_j \left(\rho V_{p_j}'(\rho)-n_jV_{p_j}(\rho)\right)
e^{in_j\theta}.\]
On the inner boundary $\rho=1$, we have $V_{p_j}(1)=0$ and
\[
V_{p_j}'(1)
=
\frac{2p_j}{s^{p_j}-s^{-p_j}}.
\]
Therefore, by the triangle inequality, we get
\[
|P_{\mathrm{pert}}(e^{i\theta})|
\leq
\sum_{j\neq0}|c_j|
\frac{2p_j}{s^{p_j}-s^{-p_j}}
=:E_1(t,m),
\]
and similarly
\[
|Q_{\mathrm{pert}}(e^{i\theta})|
\leq
E_1(t,m).
\]
Consequently, on $|z|=1$,
\[
|Q(e^{i\theta})|
\leq
2\beta(t)+E_1(t,m),
\]
while
\[
|P(e^{i\theta})|
\geq
2a(t)-E_1(t,m).
\]

We now estimate $E_1(t,m)$ as follows. For $0<t\leq1$ and $j\geq1$,
\[
|J_j(t)|
\leq
e^{1/4}\frac{(t/2)^j}{j!},
\qquad
|J_{-j}(t)|=|J_j(t)|.
\]
Indeed, the power-series expansion gives
\[
J_j(t)
=
\sum_{\ell=0}^{\infty}
\frac{(-1)^\ell}{\ell!(j+\ell)!}
\left(\frac{t}{2}\right)^{j+2\ell}.
\]
Hence
\[
|J_j(t)|
\leq
\frac{(t/2)^j}{j!}
\sum_{\ell=0}^{\infty}
\frac{1}{\ell!}
\left(\frac{t^2}{4}\right)^\ell
=
e^{t^2/4}
\frac{(t/2)^j}{j!}
\leq
e^{1/4}
\frac{(t/2)^j}{j!}.
\]
Also, since $J_{-j}(t)=(-1)^jJ_j(t)$, we have $|J_{-j}(t)|=|J_j(t)|$.

Now $p_j=|1+mj|$. For $m\geq2$ and $j\neq0$,
\[
p_j\geq m|j|-1,
\qquad
p_j\leq m|j|+1\leq 2m|j|.
\]
Moreover, for $p\geq1$,
\[
s^p-s^{-p}
=
s^p(1-s^{-2p})
\geq
s^p(1-s^{-2}).
\]
Therefore
\[
\frac{p_j}{s^{p_j}-s^{-p_j}}
\leq
C_s p_j s^{-p_j}
\leq
C_s m|j|s^{-m|j|}.
\]
Since $c_j=SJ_j(t)$, it follows that
\[
E_1(t,m)
\leq
C_{s,S}m
\sum_{j\neq0}
|j|\,|J_j(t)|s^{-m|j|}.
\]
Using the Bessel-coefficient bound above, we get
\[
E_1(t,m)
\leq
C_{s,S}m
\sum_{k=1}^{\infty}
k\frac{(t/2)^k}{k!}s^{-mk}.
\]
Putting $x=t/(2s^m)$ and using
\[
\sum_{k=1}^{\infty}k\frac{x^k}{k!}=xe^x,
\]
we obtain
\[
E_1(t,m)
\leq
C_{s,S}mxe^x
\leq
C_{s,S}mt s^{-m}.
\]

Now, we choose
\begin{equation}\label{eq:m-choice}
m=m(t)
=
\left\lceil
\frac{3\log(1/t)}{\log s}
\right\rceil.
\end{equation}
Then $m(t)\to\infty$ as $t\to0$, but slowly enough that, for sufficiently small $t>0$,
$m(t)\geq2$ and $m(t)t<1$. Moreover, $m(t)t\to0$ and
\[
m(t)t s^{-m(t)}
=
O\bigl(t^4\log(1/t)\bigr)
=
o(t^2).
\]
Indeed, from $m(t)=\left\lceil 3\log(1/t)/\log s\right\rceil$, we have
$m(t)=O(\log(1/t))$ and $m(t)\geq 3\log(1/t)/\log s$. Hence
\[
s^{-m(t)}\leq s^{-3\log(1/t)/\log s}=t^3.
\]
Therefore
\[
m(t)t s^{-m(t)}
=O(\log(1/t))\,t\,O(t^3)
=O\bigl(t^4\log(1/t)\bigr)=o(t^2).
\]
This implies that
\begin{equation}\label{eq:E1-small}
E_1(t,m(t))=o(t^2).
\end{equation}
Let
\[
c_{s,S}=\frac{S(s-s^{-1})}{4(S-s^{-1})^2}>0
\]
be the coefficient of $t^2$ in \eqref{eq:q0-expansion}. It follows from \eqref{eq:q0-expansion}, after
shrinking $t$ if necessary, that
\[
q_0(t)\leq \kappa-\frac{c_{s,S}}{2}t^2.
\]
Also $a(t)\to (S-s^{-1})/(s-s^{-1})>0$. Since $E_1(t,m(t))=o(t^2)$, and since the quotient
\[
(a,\beta,E)\mapsto \frac{2\beta+E}{2a-E}
\]
is uniformly Lipschitz near $(a(0),\beta(0),0)$, we may further shrink $t$ so that
\[
\frac{2\beta(t)+E_1(t,m(t))}
{2a(t)-E_1(t,m(t))}
\leq
q_0(t)+\frac{c_{s,S}}{4}t^2
\leq
\kappa-\frac{c_{s,S}}{4}t^2
<\kappa.
\]
Therefore
\begin{equation}\label{eq:inner-omega-bound}
\sup_{|z|=1}|\omega_h(z)|<\kappa.
\end{equation}

We next estimate the outer boundary. Put
\[
\lambda_p
=
sV_p'(s)
=
p\frac{s^p+s^{-p}}{s^p-s^{-p}}.
\]
At $\rho=s$, the non-first-mode contributions to $P$ and $Q$ are bounded by
\[
E_s^P(t,m)
=
\sum_{j\neq0}
|c_j|\,|\lambda_{p_j}+n_j|,
\qquad
E_s^Q(t,m)
=
\sum_{j\neq0}
|c_j|\,|\lambda_{p_j}-n_j|.
\]
Since
\[
\lambda_p
=
p\frac{1+s^{-2p}}{1-s^{-2p}}
\leq C_s p,
\]
and since $|n_j|=p_j$, we have $|\lambda_{p_j}+n_j|+|\lambda_{p_j}-n_j|\leq C_s p_j$.
Using $p_j\leq2m|j|$ for $m\geq2$, we get
\[
E_s^P(t,m)+E_s^Q(t,m)
\leq
C_{s,S}m
\sum_{j\neq0}|j|\,|J_j(t)|.
\]
By the same Bessel-coefficient bound, one has
\[
\sum_{j\neq0}|j|\,|J_j(t)|
\leq
C
\sum_{k=1}^{\infty}
k\frac{(t/2)^k}{k!}
=
C\frac{t}{2}e^{t/2}
\leq
Ct.
\]
Therefore
\begin{equation}\label{eq:outer-error-bound}
E_s^P(t,m)+E_s^Q(t,m)
\leq
C_{s,S}mt.
\end{equation}
For the choice \eqref{eq:m-choice}, this tends to zero.

On $|z|=s$, the first-mode terms satisfy
\[
|Q_0(se^{i\theta})|
=
\frac{2\beta(t)}{s},
\qquad
|P_0(se^{i\theta})|
=
2a(t)s.
\]
Thus
\[
|\omega_h(se^{i\theta})|
\leq
\frac{2\beta(t)/s+E_s^Q(t,m(t))}
{2a(t)s-E_s^P(t,m(t))}.
\]
The denominator is positive for all sufficiently small $t$, because $a(t)\to a(0)>0$ and
$E_s^P(t,m(t))=O(m(t)t)\to0$. The right-hand side tends, as $t\to0$, to
\[
\frac{\beta(0)}{a(0)s^2}
=
\frac{\kappa}{s^2}
<\kappa.
\]
Thus, the strict margin at the limiting outer boundary persists for small $t$. Therefore
\begin{equation}\label{eq:outer-omega-bound}
\sup_{|z|=s}|\omega_h(z)|<\kappa
\end{equation}
for all sufficiently small $t>0$.

It remains to prove that $\omega_h$ is holomorphic in the annulus. The first
mode contributes
$P_0(\rho e^{i\theta})=2a\rho e^{i\theta}$, and thus
\[
|P_0|\geq2a.
\]
For $1\leq\rho\leq s$,
\[
\rho V_p'(\rho)+pV_p(\rho)
=
\frac{2p\rho^p}{s^p-s^{-p}},
\qquad
\rho V_p'(\rho)-pV_p(\rho)
=
\frac{2p\rho^{-p}}{s^p-s^{-p}}.
\]
Consequently, whether $n_j=p_j$ or $n_j=-p_j$, we have
\[
\sup_{1\leq\rho\leq s}
\left|
\rho V_{p_j}'(\rho)+n_jV_{p_j}(\rho)
\right|
\leq
C_s p_j.
\]
Therefore
\[
\sum_{j\neq0}|c_j|
\sup_{1\leq\rho\leq s}
\left|
\rho V_{p_j}'(\rho)+n_jV_{p_j}(\rho)
\right|
\leq
C_{s,S}
\sum_{j\neq0}p_j|J_j(t)|.
\]
Using $p_j\leq2m|j|$ and the Bessel-coefficient bound again, we obtain
\begin{equation}\label{eq:P-perturb-bound}
\sum_{j\neq0}|c_j|
\sup_{1\leq\rho\leq s}
\left|
\rho V_{p_j}'(\rho)+n_jV_{p_j}(\rho)
\right|
\leq
C_{s,S}mt.
\end{equation}
Since
\[
a(t)\to \frac{S-s^{-1}}{s-s^{-1}}>0
\qquad\text{and}\qquad
m(t)t\to0,
\]
we may choose $t>0$ so small that the right-hand side of
\eqref{eq:P-perturb-bound}, with $m=m(t)$, is smaller than $a(t)$.
Hence, on the closed annulus, we get
\[
|P|
\geq
|P_0|-|P-P_0|
\geq
2a(t)-a(t)
=
a(t)>0.
\]
This shows that $P$ has no zeros on the closed annulus. Lemma~\ref{lem:dilatation-maximum},
together with \eqref{eq:inner-omega-bound} and \eqref{eq:outer-omega-bound}, gives
\[
\norm{\omega_h}_{\infty}<\kappa.
\]
In particular,
\[
J_h=|h_z|^2-|h_{\bar z}|^2>0,
\]
so $h$ is an orientation-preserving local diffeomorphism. The boundary maps in
\eqref{eq:counterexample-boundary-data} have degree one on their respective target circles, and
therefore Lemma~\ref{lem:annulus-degree} shows that $h$ is a diffeomorphism of $A(1,s)$ onto
$A(1,S)$. The map is non-radial, because the outer boundary parametrization contains the
nonconstant angular term $t\sin m\theta$.

Finally, if the conjectured upper estimate were true for this map with
\[
K=\frac{1+q}{1-q},
\qquad
q=\norm{\omega_h}_{\infty},
\]
then \eqref{eq:kappa-bound} would force $q\geq\kappa$, contradicting
$q<\kappa$.
\end{proof}

\begin{remark}
For $(s,S)=(2,3)$, the threshold is
\[
\kappa(2,3)=\frac{3-2}{3-1/2}=\frac25.
\]
Taking $t=1/50$ and $m=20$ in the above construction gives a harmonic orientation-preserving
diffeomorphism $h:A(1,2)\to A(1,3)$ with
\[
\norm{\omega_h}_{\infty}<0.39993<\frac25.
\]
The details are recorded in the appendix. They make the contradiction explicit: for
$q=0.39993$, the corresponding quasiconformal constant is
\[
K=\frac{1+q}{1-q}=2.33294\ldots<\frac73,
\]
and the conjectured right-hand side equals
\[
\frac{K+1}{2}\,2-\frac{K-1}{4}=2.999708\ldots,
\]
so the conjectured inequality would force $3\leq2.999708\ldots$.
\end{remark}

\begin{corollary}\label{cor:nonradial-extremal-value}
For $1<s<S$, let $\mathcal H(s,S)$ denote the class of harmonic orientation-preserving
diffeomorphisms from $A(1,s)$ onto $A(1,S)$, and put
\[
\mathcal H_{qc}(s,S)=\{h\in\mathcal H(s,S):\|\omega_h\|_\infty<1\}.
\]
Set

\[
q_*(s,S)=\inf_{h\in\mathcal H_{qc}(s,S)}\|\omega_h\|_\infty,
\qquad
K_*(s,S)=\frac{1+q_*(s,S)}{1-q_*(s,S)}.
\]
Then
\[
q_*(s,S)<\kappa(s,S)=\frac{S-s}{S-s^{-1}}.
\]
The radial upper model has value exactly $\kappa(s,S)$. Hence any minimizer attaining
$q_*(s,S)$, should such a minimizer exist, is necessarily non-radial.
\end{corollary}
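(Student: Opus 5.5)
The corollary follows almost directly from Theorem~\ref{thm:counterexamples}. I will check three things: that the constructed map is admissible for the infimum, that the radial model has value exactly $\kappa$, and that any minimizer is non-radial.

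For the strict inequality, fix $1<s<S$ and let $h$ be the non-radial harmonic orientation-preserving diffeomorphism $A(1,s)\to A(1,S)$ given by Theorem~\ref{thm:counterexamples}. It satisfies $\|\omega_h\|_\infty<\kappa(s,S)$. Since $s<S$, we have $s^{-1}<s<S$, so $0<\kappa(s,S)<1$. Hence $\|\omega_h\|_\infty<1$, which means $h\in\mathcal H_{qc}(s,S)$. Therefore $q_*(s,S)\le\|\omega_h\|_\infty<\kappa(s,S)$. This inequality is strict for the specific map, so there is no limiting issue. As a consequence, $K_*(s,S)<(1+\kappa)/(1-\kappa)$, because $q\mapsto(1+q)/(1-q)$ is increasing.

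For the radial value, I use Lemma~\ref{lem:radial} and its proof. Every radial harmonic degree-one map $A(1,s)\to A(1,S)$ is, up to rotation, $z\mapsto az+b/\bar z$ with $a+b=1$ and $as+b/s=S$. These two equations determine $a$ and $b$ uniquely. Since $S>s$, we get $b=-\beta<0$ with $\beta=(S-s)/(s-s^{-1})$ and $a=(S-s^{-1})/(s-s^{-1})$. The second dilatation is $\omega=\beta/(a z^{2})$, and its supremum is attained on $|z|=1$. This gives $\|\omega\|_\infty=\beta/a=\kappa(s,S)$. So the radial class in $\mathcal H_{qc}(s,S)$ consists of a single map up to rotation, and it has value exactly $\kappa$.

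Finally, suppose some $h_0\in\mathcal H_{qc}(s,S)$ attains $q_*(s,S)$. If $h_0$ were radial, the previous step would give $\|\omega_{h_0}\|_\infty=\kappa>q_*$, a contradiction. So $h_0$ is non-radial. The only point requiring any care is the uniqueness of the radial competitor. If "radial" also allowed the spiral-radial maps mentioned in the introduction, I would need to argue separately. I would do that by the same one-mode computation: a radial factor $a\rho+b/\rho$ times $e^{i(\theta+c)}$ has the same dilatation modulus, so its value is still $\kappa$.
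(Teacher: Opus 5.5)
Your argument is correct and is essentially the paper's proof: the radial upper model $a_0z-\beta_0/\bar z$ has $\|\omega\|_\infty=\beta_0/a_0=\kappa(s,S)$, and the map from Theorem~\ref{thm:counterexamples} is admissible with strictly smaller value, which gives $q_*(s,S)<\kappa(s,S)$ and excludes radial minimizers. Your check that $0<\kappa<1$, so that the counterexample really lies in $\mathcal H_{qc}(s,S)$, makes explicit what the paper leaves implicit.

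One small correction concerns your closing aside. A genuine spiral-radial map $az+b/\bar z$ with $b/a\notin\mathbb R$ is not a radial map rotated by a constant offset $c$, and its value is not $\kappa$. By the computation in Lemma~\ref{lem:onemode} it satisfies $|b|/|a|\ge\kappa(s,S)$, with equality only when $b/a=-\kappa(s,S)$, so such maps still cannot be minimizers and your conclusion is unaffected.
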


\begin{proof}
The radial upper model
\[
h_{\rm rad}(z)=a_0z-\frac{\beta_0}{\bar z},
\qquad
 a_0=\frac{S-s^{-1}}{s-s^{-1}},
\qquad
 \beta_0=\frac{S-s}{s-s^{-1}},
\]
maps $A(1,s)$ onto $A(1,S)$ and satisfies
\[
\|\omega_{h_{\rm rad}}\|_\infty=\frac{\beta_0}{a_0}=\kappa(s,S).
\]
Theorem~\ref{thm:counterexamples} gives an admissible non-radial harmonic diffeomorphism with
strictly smaller maximal dilatation. This proves the strict inequality for $q_*(s,S)$ and rules out
radial extremals.
\end{proof}

\begin{remark}\label{rem:extremal-attainment}
The proof of Theorem~\ref{thm:counterexamples} does not require the infimum defining
$q_*(s,S)$ to be attained. A compactness approach for normalized quasiconformal homeomorphisms
suggests a possible existence theorem for extremals, but a complete proof would have to control the
boundary correspondence, rule out collapse of boundary components, and preserve the degree in the
limit. We therefore treat attainment and uniqueness of extremals as part of the open extremal
problem stated in Section~\ref{sec:conclusion}.
\end{remark}

\section{The one-mode framework and positive criteria}\label{sec4}

The counterexample identifies the obstruction to the full conjecture. We now isolate several
hypotheses under which the one-mode mechanism still controls the map and the conjectured
estimate remains valid. These results are meant as supporting criteria explaining the range of the
one-mode intuition.

We begin by recording the elementary complex-coefficient one-mode calculation. The genuinely
new positive statements in this section are the energy, Fourier, and low-frequency criteria that
follow.

\subsection{The known one-mode spiral-radial model}

We next recall the standard one-mode, or spiral-radial, model. This is the elementary
complex-coefficient version of the same calculation underlying the principal harmonic maps in
\cite{IKO2012}. The maps have the form
\[
h(z)=az+\frac{b}{\bar z},
\qquad a,b\in\C.
\]
If $b/a$ is not real, then the image of each circle $|z|=\rho$ is still a circle centered at the
origin, but the angular rotation of this circle depends on $\rho$. Thus the mapping is generally
not radial, even though it has only one angular Fourier mode.

\begin{lemma}
\label{lem:onemode}
Let
\[
h(z)=az+\frac{b}{\bar z},
\qquad a,b\in\C,
\]
be an orientation-preserving harmonic diffeomorphism $h:A(1,s)\rightarrow A(1,S)$.
Assume the boundary radii are normalized by $|a+b|=1$ and $|as+b/s|=S$.
Set $q=\norm{\omega_h}_{\infty}=|b|/|a|<1$.
Then
\[
S\leq \frac{s-q/s}{1-q}.
\]
Consequently, if $h$ is $K$-quasiconformal, then
\begin{equation}\label{eq:onemode-K-bound}
S\leq \frac{K+1}{2}s-\frac{K-1}{2s}.
\end{equation}
Equality in \eqref{eq:onemode-K-bound} forces $b/a=-(K-1)/(K+1)$ and recovers the radial extremal map, up to
rotation.
\end{lemma}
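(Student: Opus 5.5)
The argument reduces everything to the complex ratio $\zeta=b/a$ with $|\zeta|=q$. First we compute the dilatation. Writing $h=f+\overline g$ with $f(z)=az$ and $g(z)=\bar b\log z$ (locally; only $g'=\bar b/z$ matters), we get $h_z=a$ and $h_{\bar z}=-b/\bar z^2$. Hence $|\omega_h(z)|=|b|/(|a||z|^2)$, and its supremum over $1\le|z|\le s$ is $|b|/|a|=q$, attained on the inner circle. Orientation preservation forces $q<1$, which is consistent with the hypothesis.

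Next we rewrite the two normalizations. Since $|a+b|=1$, we have $|a|\,|1+\zeta|=1$. The outer condition gives
\[
S=|a|\,\bigl|s+\zeta/s\bigr|=\frac{|s+\zeta s^{-1}|}{|1+\zeta|}.
\]
The task is therefore to maximize the function $F(\zeta)=|s+\zeta/s|/|1+\zeta|$ over the circle $|\zeta|=q$. We expect this maximum to occur at $\zeta=-q$, where
\[
F(-q)=\frac{s-q/s}{1-q}.
\]
To prove it, we show $|s+\zeta/s|(1-q)\le (s-q/s)|1+\zeta|$. Squaring and writing $x=\operatorname{Re}\zeta\in[-q,q]$, both sides become affine in $x$:
\[
L(x)=(1-q)^2\bigl(s^2+2x+q^2/s^2\bigr),\qquad
R(x)=(s-q/s)^2(1+2x+q^2).
\]
Because the difference $R-L$ is affine in $x$, it suffices to check it at the endpoints $x=\pm q$. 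At $x=-q$ both sides equal $(1-q)^2(s-q/s)^2$, so there is equality. At $x=q$ the inequality reads $(1-q)^2(s+q/s)^2\le (s-q/s)^2(1+q)^2$. This is equivalent to $(1-q)(s+q/s)\le(1+q)(s-q/s)$, i.e. $2q/s\le 2qs$, which holds since $s>1$. Hence $R-L\ge0$ on $[-q,q]$, which gives $S\le (s-q/s)/(1-q)$.

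For the $K$-bound, suppose $h$ is $K$-quasiconformal. Then $q\le k=(K-1)/(K+1)$. The map $q\mapsto(s-q/s)/(1-q)$ is increasing, because its derivative has numerator $(s-1/s)>0$. Therefore $S\le (s-k/s)/(1-k)=\Phi_K(s)$, using the identity \eqref{eq:k-form-upper}.

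For the equality case, equality forces $q=k$ by strict monotonicity. It then forces $R(x)=L(x)$, and since $R-L$ is affine with value $0$ at $x=-q$ and positive value at $x=q$ (as $q>0$ when $K>1$), this happens only at $x=-q$. That gives $\zeta=-k$, so $b=-ka$. Then $|a|(1-k)=1$ gives $|a|=(K+1)/2$, and after a rotation we may take $a>0$. This recovers the map \eqref{eq:radial-extremal}. In the degenerate case $K=1$ we have $b=0$ and $h$ is a rotation of $z$.

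The only nontrivial point is locating the maximizer of $F$ on the circle $|\zeta|=q$. The affine-in-$\operatorname{Re}\zeta$ reduction above handles it cleanly.
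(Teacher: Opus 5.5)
Your proof is correct and follows essentially the paper's route: you reduce to $\zeta=b/a$ with $|\zeta|=q$ and $S=|s+\zeta/s|/|1+\zeta|$, use that both squared moduli are affine in $\operatorname{Re}\zeta$ to place the maximum at $\zeta=-q$, and then use monotonicity in $q$. The only difference is that the paper differentiates the quotient in $x=\cos\varphi$, whereas you check an affine difference at the endpoints. One small slip in your aside: since $\overline{g}=b/\bar z$, one has $g(z)=\bar b/z$ and $g'(z)=-\bar b/z^2$, not $g=\bar b\log z$; your subsequent formulas $h_{\bar z}=-b/\bar z^2$ and $|\omega_h|=|b|/(|a|\,|z|^2)$ are nevertheless correct.
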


\begin{proof}
We have $h_z=a$ and $h_{\bar z}=-b/\bar{z}^2$.
Hence
\[
|\omega_h(z)|=\frac{|b|}{|a|\,|z|^2},
\qquad
q=\norm{\omega_h}_{\infty}=\frac{|b|}{|a|}.
\]
Put $c=b/a$ and $|c|=q$.
The boundary normalization gives $|a|=1/|1+c|$,
and therefore
\begin{equation}\label{eq:S-complex-coeff}
S=\left|as+\frac{b}{s}\right|
=\frac{|s+c/s|}{|1+c|}.
\end{equation}
Write $c=qe^{i\varphi}$ and $x=\cos\varphi$. Squaring \eqref{eq:S-complex-coeff}, we have
\[
S^2=\frac{s^2+q^2/s^2+2qx}{1+q^2+2qx}.
\]
As a function of $x\in[-1,1]$, the right-hand side has derivative
\[
\frac{2q(1+q^2-s^2-q^2/s^2)}{(1+q^2+2qx)^2}<0,
\]
because $s>1$. Thus, it is maximized at $x=-1$, i.e.\ at $c=-q$. Hence
\[
S\leq \frac{s-q/s}{1-q}.
\]
If $h$ is $K$-quasiconformal, then $q\leq k=(K-1)/(K+1)$. Since
\[
\frac{\mathrm{d}}{\mathrm{d}q}\left(\frac{s-q/s}{1-q}\right)
=\frac{s-s^{-1}}{(1-q)^2}>0,
\]
we get
\[
S\leq \frac{s-q/s}{1-q}
\leq \frac{s-k/s}{1-k}
=\frac{K+1}{2}s-\frac{K-1}{2s}.
\]
Equality in the $K$-bound requires $q=k$ and $c=-q$. After a rotation this is exactly
\[
h(z)=\frac{K+1}{2}z-\frac{K-1}{2\bar z}.
\]
The proof of Lemma~\ref{lem:onemode} is complete.
\end{proof}

\begin{remark*}
Lemma~\ref{lem:radial} and Lemma~\ref{lem:onemode} are essentially known model computations. The radial extremal is
already the equality candidate in Conjecture~1.8 of Iwaniec--Kovalev--Onninen, and the same
one-mode structure appears in their principal harmonic maps and equality cases
$c(z+\lambda/\bar z)$ \cite{IKO2012}. The point of including the two lemmas is not to claim novelty, but to make explicit the
one-mode framework behind the conjecture. In both cases the same sharp expression
\[
\frac{K+1}{2}s-\frac{K-1}{2s}
\]
is forced, with equality only in the radial extremal case.
\end{remark*}

\subsection{An inner-boundary anti-conformal energy condition}

The next criterion is elementary but very useful. It says that the conjectured estimate follows as
soon as the anti-conformal component has enough $L^2$ energy on the inner boundary.

For a harmonic map $h=f+\overline g$, define two functions $P$, $Q$ on $|z|=1$, as follows
\[
P(z)=2zh_z(z),
\qquad
Q(z)=2\bar z h_{\bar z}(z).
\]
Then $|\omega_h(z)|=|Q|/|P|$, on $|z|=1$.

\begin{theorem}
\label{thm:energy}
Let $S>s$ and set
\[
\kappa=\frac{S-s}{S-s^{-1}}.
\]
Let $h:A(1,s)\rightarrow A(1,S)$
be a harmonic orientation-preserving diffeomorphism. Suppose that
\begin{equation}\label{eq:energy-condition}
\frac{1}{2\pi}\int_0^{2\pi}|Q(e^{i\theta})|^2\,\mathrm{d}\theta
\geq \kappa^2
\frac{1}{2\pi}\int_0^{2\pi}|P(e^{i\theta})|^2\,\mathrm{d}\theta.
\end{equation}
Then $\norm{\omega_h}_{\infty}\geq \kappa$.
Consequently, the conjectured upper estimate \eqref{eq:normalized-upper} holds for every $K$-quasiconformal map
satisfying \eqref{eq:energy-condition}.
\end{theorem}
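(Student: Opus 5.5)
The argument is a direct pointwise-to-average comparison on the inner circle, using the identity $|\omega_h|=|Q|/|P|$ recorded just before the theorem. I would proceed by contradiction.

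Suppose $q:=\norm{\omega_h}_{\infty}<\kappa$. Since $h$ is an orientation-preserving harmonic diffeomorphism, $J_h=|f'|^2-|g'|^2>0$ throughout the annulus. I will assume, as in Lemma~\ref{lem:dilatation-maximum}, that $h$ is $C^1$ up to $|z|=1$, so that $P$ and $Q$ are defined there; the hypothesis \eqref{eq:energy-condition} already presupposes these traces.

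The key pointwise bound is
\[
|Q(e^{i\theta})|\le q\,|P(e^{i\theta})|\qquad\text{for all }\theta.
\]
To get it, note that $|g'|\le q|f'|$ holds in the open annulus, and both sides are continuous up to $|z|=1$. Letting $|z|\to1$ gives $|g'(e^{i\theta})|\le q|f'(e^{i\theta})|$. Multiplying by $2$ and using $|Q|=2|\bar z h_{\bar z}|=2|g'|$ and $|P|=2|f'|$ on $|z|=1$ yields the bound. This route does not require $P\neq0$ on the boundary.

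Squaring and averaging over $\theta$ gives
\[
\frac1{2\pi}\int_0^{2\pi}|Q|^2\,d\theta\le q^2\frac1{2\pi}\int_0^{2\pi}|P|^2\,d\theta.
\]
Combining this with \eqref{eq:energy-condition}:
\[
\kappa^2\frac1{2\pi}\int|P|^2\le q^2\frac1{2\pi}\int|P|^2 .
\]
The main obstacle is ruling out $\int|P|^2=0$, since otherwise one cannot divide and conclude $\kappa\le q$. If $\int|P|^2=0$, then $f'\equiv0$ on $|z|=1$. Because $f'$ is holomorphic and continuous up to this circle, the reflection principle (or uniqueness of boundary values on an arc) forces $f'\equiv0$. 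That contradicts $J_h>0$. Hence $\int|P|^2>0$, and dividing gives $\kappa\le q$, contradicting $q<\kappa$. Therefore $\norm{\omega_h}_\infty\ge\kappa$.

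For the consequence, let $h$ be $K$-quasiconformal, so $\norm{\omega_h}_\infty\le k=(K-1)/(K+1)$. The bound just proved gives $k\ge\kappa(s,S)$. By the equivalence chain preceding \eqref{eq:kappa-bound}, valid since $S>s$, this is exactly $S\le\Phi_K(s)$, i.e.\ \eqref{eq:normalized-upper}.
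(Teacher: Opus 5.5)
Your proof is correct and follows the paper's argument: you bound $|Q|\le\|\omega_h\|_\infty|P|$ pointwise on $|z|=1$, square and average against \eqref{eq:energy-condition}, divide by $\int|P|^2\neq0$, and then convert $\kappa\le\|\omega_h\|_\infty\le k$ into \eqref{eq:normalized-upper}. The contradiction framing is unnecessary, but your continuity argument for the boundary inequality and your boundary-uniqueness argument for $f'$, which gives $P\not\equiv0$ on the inner circle, usefully fill in steps the paper states in one line.
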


\begin{proof}
On $|z|=1$,
\[
|Q|=|\omega_h|\,|P|\leq \norm{\omega_h}_{\infty}|P|.
\]
Squaring and integrating gives
\[
\frac{1}{2\pi}\int_0^{2\pi}|Q|^2\,\mathrm{d}\theta
\leq
\norm{\omega_h}_{\infty}^2
\frac{1}{2\pi}\int_0^{2\pi}|P|^2\,\mathrm{d}\theta.
\]
Combining this inequality with \eqref{eq:energy-condition} yields
\[
\kappa^2
\frac1{2\pi}\int_0^{2\pi}|P(e^{i\theta})|^2\,\mathrm{d}\theta
\le
\|\omega_h\|_\infty^2
\frac1{2\pi}\int_0^{2\pi}|P(e^{i\theta})|^2\,\mathrm{d}\theta .
\]
Since \(h\) is an orientation-preserving diffeomorphism, \(P\) is not identically zero
on \(|z|=1\). Hence \(\|\omega_h\|_\infty\ge \kappa\).

If $h$ is $K$-quasiconformal, then
\[
\kappa\leq \norm{\omega_h}_{\infty}\leq k=\frac{K-1}{K+1},
\]
which is equivalent to \eqref{eq:normalized-upper}.
\end{proof}

\begin{remark}
The high-frequency counterexample fails this energy test. Its non-first Fourier modes are large
enough on the outer circle to reparametrize the target boundary, but their inner-boundary
contribution is exponentially small in the frequency. Thus, the inner boundary does not see
enough anti-conformal energy to force $\norm{\omega_h}_{\infty}\geq\kappa$.
\end{remark}

\subsection{A Fourier spectral criterion}

The preceding energy criterion becomes explicit when the inner boundary is fixed and the outer
boundary is written in Fourier series. This form also quantifies the leakage mechanism: the
coefficient of a mode of order $n$ is multiplied at the inner boundary by
$2|n|/(s^{|n|}-s^{-|n|})$, which is large for low modes but exponentially small for high modes.

Assume
\begin{equation}\label{eq:fourier-boundary-data}
h(e^{i\theta})=e^{i\theta},
\qquad
h(se^{i\theta})=F(\theta)=\sum_{n\in\mathbb{Z}}c_ne^{in\theta},
\qquad |F(\theta)|=S.
\end{equation}
Set $D=s-s^{-1}$, $a=(c_1-s^{-1})/D$ and $b=(s-c_1)/D$.
Then the first mode is
\[
\left(a\rho+\frac{b}{\rho}\right)e^{i\theta}.
\]
For $n\neq0$, define
\[
\mu_n=\frac{2|n|}{s^{|n|}-s^{-|n|}},
\]
and set $\mu_0=1/\log s$.
Let
\begin{equation}\label{eq:E-definition}
E^2=\mu_0^2|c_0|^2+
\sum_{n\neq0,1}\mu_n^2|c_n|^2.
\end{equation}

\begin{theorem}
\label{thm:fourier-spectral}
Let $S>s$ and set
\[
\kappa=\frac{S-s}{S-s^{-1}}.
\]
Let $h:A(1,s)\to A(1,S)$ be a harmonic orientation-preserving diffeomorphism with boundary
values \eqref{eq:fourier-boundary-data}. If
\begin{equation}\label{eq:spectral-condition}
4|b|^2+E^2\geq \kappa^2\bigl(4|a|^2+E^2\bigr),
\end{equation}
then $\norm{\omega_h}_{\infty}\geq\kappa$.
Consequently, every $K$-quasiconformal map satisfying \eqref{eq:spectral-condition} satisfies the conjectured upper
estimate.
\end{theorem}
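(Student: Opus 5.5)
The plan is to reduce Theorem~\ref{thm:fourier-spectral} to Theorem~\ref{thm:energy}. I will compute the two boundary energies $\frac1{2\pi}\int|P|^2\,d\theta$ and $\frac1{2\pi}\int|Q|^2\,d\theta$ on $|z|=1$ explicitly through Parseval, and then show that \eqref{eq:spectral-condition} is literally the energy condition \eqref{eq:energy-condition}.

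\textbf{Step 1: the harmonic extension, mode by mode.} Since $h$ is harmonic on the annulus, it separates into Fourier modes with the boundary data \eqref{eq:fourier-boundary-data}. The inner boundary value $e^{i\theta}$ only affects the mode $n=1$, so the modes are:
\begin{itemize}
\item for $n=0$: $c_0\,\log\rho/\log s$;
\item for $n=1$: $(a\rho+b/\rho)e^{i\theta}$, with $a+b=1$ and $as+b/s=c_1$, which gives the stated $a,b$;
\item for $n\neq 0,1$: $c_n V_{|n|}(\rho)e^{in\theta}$, with $V_p$ as in the proof of Theorem~\ref{thm:counterexamples}.
\end{itemize}
I will assume, as is implicit in the statement and in Theorem~\ref{thm:energy}, that $h$ is $C^1$ up to $|z|=1$. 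Then $P$ and $Q$ have $L^2$ traces on the inner circle, and the termwise differentiation below is legitimate, at least in the $L^2$ sense.

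\textbf{Step 2: Fourier coefficients of $P$ and $Q$ at $\rho=1$.} I use $P=\rho h_\rho-ih_\theta$ and $Q=2\bar z h_{\bar z}=\rho h_\rho+ih_\theta$.
\begin{itemize}
\item For $n=0$: the mode is independent of $\theta$ and $\rho\partial_\rho(c_0\log\rho/\log s)=c_0/\log s$. Hence both $P$ and $Q$ have zeroth coefficient $\mu_0c_0$.
\item For $n\ne0,1$: $V_{|n|}(1)=0$, so the $h_\theta$ term drops out. Both $P$ and $Q$ then have $n$-th coefficient $c_nV_{|n|}'(1)=c_n\mu_n$, using $V_p'(1)=2p/(s^p-s^{-p})$ as computed earlier.
\item For $n=1$: the computation of the first mode gives $P_0=2a e^{i\theta}$ and $Q_0=(a-b)e^{i\theta}-(a+b)e^{i\theta}=-2be^{i\theta}$ at $\rho=1$.
\end{itemize}
The frequencies $n$ are all distinct, so Parseval gives
\[
\frac1{2\pi}\int_0^{2\pi}|P|^2\,d\theta=4|a|^2+E^2,\qquad
\frac1{2\pi}\int_0^{2\pi}|Q|^2\,d\theta=4|b|^2+E^2.
\]

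\textbf{Step 3: conclusion.} With these two identities, \eqref{eq:spectral-condition} is exactly \eqref{eq:energy-condition}. Theorem~\ref{thm:energy} therefore yields $\|\omega_h\|_\infty\ge\kappa$. The quasiconformal consequence follows as in Theorem~\ref{thm:energy}: $\kappa\le\|\omega_h\|_\infty\le k$ is equivalent to \eqref{eq:normalized-upper}.

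The main obstacle is bookkeeping rather than depth. I must get the signs right in the $n=1$ coefficient of $Q$; it is $-2b$, and only its modulus matters. I must also justify Parseval for the boundary derivatives. The latter needs the tail $\sum\mu_n^2|c_n|^2$ to converge. It does, since $\mu_n$ decays exponentially and $\sum|c_n|^2=S^2$. Boundary regularity of $h$ then identifies the Fourier series of $P$ and $Q$ with their actual traces.
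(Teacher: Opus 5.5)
Your proposal is correct and follows the paper's proof essentially verbatim. The paper likewise writes $P(e^{i\theta})=2ae^{i\theta}+R(\theta)$ and $Q(e^{i\theta})=-2be^{i\theta}+R(\theta)$ with the common term $R(\theta)=\mu_0c_0+\sum_{n\neq0,1}\mu_nc_ne^{in\theta}$, applies Parseval, and notes that \eqref{eq:spectral-condition} is literally \eqref{eq:energy-condition}, so Theorem~\ref{thm:energy} applies (your $C^1$ assumption at $|z|=1$ is harmless and in fact automatic, since the exponentially damped series for $P$ and $Q$ converge uniformly near the inner circle).
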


\begin{proof}
The harmonic extension of the boundary data has first mode
\[
\left(a\rho+\frac{b}{\rho}\right)e^{i\theta}.
\]
For $n\neq0,1$, the $n$-th mode is
\[
c_nV_{|n|}(\rho)e^{in\theta},
\qquad
V_p(\rho)=\frac{\rho^p-\rho^{-p}}{s^p-s^{-p}},
\]
and the zero mode is
\[
c_0\frac{\log \rho}{\log s}.
\]
On $\rho=1$, the non-first modes themselves vanish, but their normal derivatives do not. A
direct calculation gives $P(e^{i\theta})=2ae^{i\theta}+R(\theta)$ and $Q(e^{i\theta})=-2be^{i\theta}+R(\theta)$,
where
\[
R(\theta)=\mu_0c_0+
\sum_{n\neq0,1}\mu_nc_ne^{in\theta}.
\]
The function $R$ has no $e^{i\theta}$ Fourier mode. By Parseval's identity,
\[
\frac{1}{2\pi}\int_0^{2\pi}|P(e^{i\theta})|^2\,\mathrm{d}\theta
=4|a|^2+E^2,
\]
and
\[
\frac{1}{2\pi}\int_0^{2\pi}|Q(e^{i\theta})|^2\,\mathrm{d}\theta
=4|b|^2+E^2.
\]
Thus, \eqref{eq:spectral-condition} is exactly the energy condition \eqref{eq:energy-condition}. The result follows from the inner-boundary energy criterion.

Equivalently, if $4|b|^2<4\kappa^2|a|^2$, then \eqref{eq:spectral-condition} can be written as
\begin{equation}\label{eq:spectral-threshold}
E^2\geq
\frac{4\bigl(\kappa^2|a|^2-|b|^2\bigr)}{1-\kappa^2}.
\end{equation}
This formulation says that the non-first modes must have sufficient leakage to the inner
boundary. Low modes leak strongly; high modes leak exponentially weakly because
$\mu_n\sim 2|n|s^{-|n|}$.
\end{proof}

\subsection{Small low-frequency boundary reparametrizations}

We now give a concrete family of non-radial maps for which the conjectured estimate holds. Let
$\phi$ be a real-valued trigonometric polynomial with mean zero,
\[
\phi(\theta)=\sum_{\ell\neq0}\phi_\ell e^{i\ell\theta},
\qquad
\phi_{-\ell}=\overline{\phi_\ell},
\qquad
\frac{1}{2\pi}\int_0^{2\pi}\phi(\theta)\,\mathrm{d}\theta=0.
\]
For small $t>0$, prescribe
\begin{equation}\label{eq:lowfreq-boundary-data}
h_t(e^{i\theta})=e^{i\theta},
\qquad
h_t(se^{i\theta})=Se^{i(\theta+t\phi(\theta))}.
\end{equation}
If $t\norm{\phi'}_\infty<1$, the outer boundary map is an orientation-preserving circle
diffeomorphism. Since the corresponding harmonic extension is a $C^1$-small perturbation of
the radial diffeomorphism, $h_t$ is a harmonic orientation-preserving diffeomorphism for all
sufficiently small $t>0$.

Define
\[
\Lambda_{s,S}=\frac{4\kappa(S-s^{-1})}{S(s-s^{-1})^2(1+\kappa)},
\qquad
\kappa=\frac{S-s}{S-s^{-1}}.
\]
Also set
\[
\mu_0=\frac{1}{\log s},
\qquad
\mu_n=\frac{2n}{s^n-s^{-n}},\quad n\geq1.
\]

\begin{theorem}
\label{thm:low-frequency}
Assume $1<s<S$. Let $\phi$ be a real trigonometric polynomial of mean zero. If
\begin{equation}\label{eq:lowfreq-spectral-condition}
\sum_{\ell\neq0}\mu_{|1+\ell|}^2|\phi_\ell|^2
>
\Lambda_{s,S}\sum_{\ell\neq0}|\phi_\ell|^2,
\end{equation}
where $\mu_{|1+\ell|}$ is interpreted as $\mu_0$ when $1+\ell=0$, then for all sufficiently small
$t>0$ the harmonic diffeomorphism $h_t$ defined by \eqref{eq:lowfreq-boundary-data} satisfies
$\norm{\omega_{h_t}}_\infty\geq \kappa(s,S)$.
Consequently, $h_t$ satisfies the conjectured upper estimate.
\end{theorem}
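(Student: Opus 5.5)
The plan is to deduce the statement from Theorem~\ref{thm:fourier-spectral}, applied to $h_t$ with the boundary data \eqref{eq:lowfreq-boundary-data}. This is legitimate because $h_t(e^{i\theta})=e^{i\theta}$ and $|F|=S$ on the outer circle, and $h_t$ is a harmonic orientation-preserving diffeomorphism for small $t$, as noted before the statement. Everything then reduces to expanding both sides of the spectral condition \eqref{eq:spectral-condition} to order $t^2$ and checking that \eqref{eq:lowfreq-spectral-condition} is exactly the strict inequality between the $t^2$-coefficients.

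\textbf{Step 1: Fourier coefficients of the outer data.} Write $F(\theta)=Se^{i\theta}e^{it\phi(\theta)}$. Since $\phi$ is a real trigonometric polynomial, $e^{it\phi}$ is real-analytic in $t$, uniformly in $\theta$. Expanding $e^{it\phi}=1+it\phi-\tfrac{t^2}{2}\phi^2+O(t^3)$ gives two facts.
\begin{itemize}
\item Because $\phi$ has mean zero, with $N:=\sum_{\ell\ne0}|\phi_\ell|^2=\frac1{2\pi}\int\phi^2$ we have $c_1=S\bigl(1-\tfrac{t^2}{2}N\bigr)+O(t^3)$.
\item For $\ell\ne0$ we have $c_{1+\ell}=iSt\,\phi_\ell+O(t^2)$.
\end{itemize}
The $O(t^3)$ and $O(t^2)$ remainders are summable when weighted by $\mu_n^2$, since $\mu_n\le C_s$ and Parseval bounds the tails. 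Hence, with $M:=\sum_{\ell\ne0}\mu_{|1+\ell|}^2|\phi_\ell|^2$, the quantity in \eqref{eq:E-definition} satisfies
\[
E^2=S^2t^2M+O(t^3).
\]
Here the zero mode $n=0$ corresponds to $\ell=-1$ and carries the weight $\mu_0$, as in the statement.

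\textbf{Step 2: the first-mode deficit.} Put $\delta=SNt^2/2$, so that $c_1=S-\delta+O(t^3)$, and recall $D=s-s^{-1}$. Using the identity $\kappa(S-s^{-1})=S-s$, a direct computation gives
\[
\kappa a+b=\frac{\delta(1-\kappa)}{D}+O(t^3).
\]
For small $t$ the coefficient $b$ is close to the negative real number $(s-S)/D$. The imaginary parts of $a$ and $b$ are $O(t^3)$, so they affect $|a|$ and $|b|$ only at order $t^6$. Therefore
\[
\kappa|a|-|b|=\frac{\delta(1-\kappa)}{D}+O(t^3),\qquad \kappa|a|+|b|=\frac{2\kappa(S-s^{-1})}{D}+O(t^2).
\]
Multiplying these two expansions,
\[
4\bigl(\kappa^2|a|^2-|b|^2\bigr)=\frac{4SNt^2(1-\kappa)\kappa(S-s^{-1})}{D^2}+O(t^3).
\]

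\textbf{Step 3: comparison and conclusion.} The spectral condition \eqref{eq:spectral-condition} is equivalent to $(1-\kappa^2)E^2\ge4(\kappa^2|a|^2-|b|^2)$. Dividing both sides by $(1-\kappa^2)S^2t^2$, it becomes
\[
M+O(t)\;\ge\;\frac{4\kappa(S-s^{-1})}{S D^2(1+\kappa)}\,N+O(t)=\Lambda_{s,S}N+O(t).
\]
Since \eqref{eq:lowfreq-spectral-condition} is the strict inequality $M>\Lambda_{s,S}N$, this holds for all sufficiently small $t$. Theorem~\ref{thm:fourier-spectral} then gives $\|\omega_{h_t}\|_\infty\ge\kappa$. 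The conjectured estimate follows by the equivalence \eqref{eq:kappa-bound}.

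The main obstacle is the bookkeeping in Steps 1--2: the deficit $\kappa|a|-|b|$ and $E^2$ are both of exact order $t^2$, so every remainder must be shown to be genuinely $O(t^3)$. This includes the imaginary part of $c_1$, which I would show is odd in $t$ and hence $O(t^3)$ (it is at least $O(t^3)$, from the third-order term). It also requires uniform summability of the weighted tail of the coefficients.
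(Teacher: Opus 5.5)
Your proposal is correct and follows the paper's proof step for step. Like the paper, you reduce to Theorem~\ref{thm:fourier-spectral}, expand $c_1(t)$ and $c_{1+\ell}(t)$ to get $E_t^2=S^2t^2\sum_{\ell\neq0}\mu_{|1+\ell|}^2|\phi_\ell|^2+O(t^3)$, expand the threshold $4(\kappa^2|a|^2-|b|^2)/(1-\kappa^2)$ to order $t^2$, and compare the leading coefficients. Your factorization $\kappa^2|a|^2-|b|^2=(\kappa|a|-|b|)(\kappa|a|+|b|)$, combined with the identity $\kappa a+b=\delta(1-\kappa)/D$, spells out the ``first-order expansion in $\delta$'' that the paper states without detail.
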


\begin{proof}
Write the outer boundary as
\[
F_t(\theta)=Se^{i\theta}e^{it\phi(\theta)}=
\sum_{n\in\mathbb{Z}}c_n(t)e^{in\theta}.
\]
Since $\langle \phi\rangle=0$,
\[
e^{it\phi}=1+it\phi-\frac{t^2}{2}\phi^2+O(t^3)
\]
in every $C^r$ norm, because $\phi$ is a trigonometric polynomial. Hence
\begin{equation}\label{eq:c1-expansion}
c_1(t)=S\left(1-\frac{t^2}{2}\sum_{\ell\neq0}|\phi_\ell|^2\right)+O(t^3),
\end{equation}
and, for $\ell\neq0$,
$c_{1+\ell}(t)=iSt\phi_\ell+O(t^2)$.
Therefore the leakage quantity in \eqref{eq:E-definition} satisfies
\begin{equation}\label{eq:Et-expansion}
E_t^2=S^2t^2\sum_{\ell\neq0}\mu_{|1+\ell|}^2|\phi_\ell|^2+O(t^3).
\end{equation}
Let
\[
D=s-s^{-1},
\qquad
a(t)=\frac{c_1(t)-s^{-1}}{D},
\qquad
b(t)=\frac{s-c_1(t)}{D}.
\]
The threshold appearing in \eqref{eq:spectral-threshold} is
\begin{equation}\label{eq:T-definition}
T(t)=\frac{4\bigl(\kappa^2|a(t)|^2-|b(t)|^2\bigr)}{1-\kappa^2}.
\end{equation}
At $t=0$, the radial map has
\[
\frac{|b(0)|}{|a(0)|}=\kappa,
\]
so $T(0)=0$. From \eqref{eq:c1-expansion}, write
\[
c_1(t)=S-\delta(t)+O(t^3),
\qquad
\delta(t)=\frac{St^2}{2}\sum_{\ell\neq0}|\phi_\ell|^2.
\]
The possible imaginary part of $c_1(t)$ is $O(t^3)$ and therefore does not affect the leading
$t^2$ term in \eqref{eq:T-definition}. A first-order expansion in $\delta$ gives
\begin{equation}\label{eq:T-expansion}
T(t)=\frac{4S\kappa(S-s^{-1})}{(s-s^{-1})^2(1+\kappa)}
t^2\sum_{\ell\neq0}|\phi_\ell|^2+O(t^3).
\end{equation}
The strict spectral condition \eqref{eq:lowfreq-spectral-condition} is exactly the statement that the leading coefficient in \eqref{eq:Et-expansion} is larger than the leading coefficient in \eqref{eq:T-expansion}. Therefore, for all sufficiently small $t>0$, $E_t^2>T(t)$.
By the Fourier spectral criterion, this implies $\norm{\omega_{h_t}}_\infty\geq\kappa(s,S)$.
The equivalence \eqref{eq:kappa-bound} then gives the conjectured upper estimate for every $K$ for which $h_t$ is
$K$-quasiconformal.
\end{proof}

\begin{example}[The pair $(s,S)=(2,3)$]
Here
\[
\kappa(2,3)=\frac25,
\qquad
\Lambda_{2,3}=\frac{4\cdot(2/5)\cdot(3-1/2)}{3(2-1/2)^2(1+2/5)}\approx0.423280.
\]
Moreover
\[
\mu_0=\frac{1}{\log2}\approx1.4427,
\qquad
\mu_n=\frac{2n}{2^n-2^{-n}}.
\]
For the single-frequency perturbation $\phi(\theta)=\sin m\theta$, condition \eqref{eq:lowfreq-spectral-condition} becomes
\[
\frac12\left(\mu_{m+1}^2+\mu_{|m-1|}^2\right)>0.423280.
\]
The first few values are
\[
\begin{array}{c|c|c}
 m & \frac12(\mu_{m+1}^2+\mu_{|m-1|}^2) & \text{conclusion} \\
\hline
1 & 1.6096\ldots & \text{stable} \\
2 & 1.1791\ldots & \text{stable} \\
3 & 0.6949\ldots & \text{stable} \\
4 & 0.3392\ldots & \text{not guaranteed by this criterion}
\end{array}
\]
Thus, for $m=1,2,3$ and all sufficiently small $t>0$, the non-radial harmonic maps
\[
h_t(e^{i\theta})=e^{i\theta},
\qquad
h_t(2e^{i\theta})=3e^{i(\theta+t\sin m\theta)}
\]
satisfy the conjectured upper estimate. The high-frequency counterexample with $m=20$ lies on
the opposite side of this mechanism: the relevant $\mu_{19}$ and $\mu_{21}$ are exponentially
small, so the non-first modes do not leak enough energy to the inner boundary.
\end{example}

\section{Minimal-surface interpretation}\label{sec:minimal}

The results above have a useful geometric interpretation through the classical correspondence
between harmonic maps and conformal minimal immersions. The purpose of this section is not to
reprove this correspondence, but to identify precisely which minimal-surface category is affected
by the counterexamples.

There are two different regimes. In the single-valued regime, the height function has zero vertical
period around the annulus. In the helicoidal, or vertical-periodic, regime, the height is defined on
the universal cover and changes by a non-zero constant after one turn. The counterexamples
constructed in Section~\ref{sec3} naturally belong to the second regime.

\subsection{The standard dictionary}

Let
\[
h=f+\overline g:A(1,s)\longrightarrow \C
\]
be an orientation-preserving harmonic local diffeomorphism. We shall use the following standard
facts from the harmonic-map/minimal-surface correspondence, in the normalization needed below.

First, if there is a holomorphic function $p$ on $A(1,s)$ such that
\begin{equation}\label{eq:p-square}
p(z)^2=-f'(z)g'(z),
\end{equation}
then, on the universal cover of the annulus, after fixing a base point $z_0$ and lifting the
one-form $p(z)\,dz$ to that cover,
\[
w(z)=2\operatorname{Re}\int_{z_0}^{z} p(\zeta)\,\mathrm{d}\zeta
\]
defines a conformal minimal immersion
\[
X=(\operatorname{Re}h,\operatorname{Im}h,w).
\]
The height descends to a single-valued function on the annulus if and only if
\begin{equation}\label{eq:zero-period}
2\operatorname{Re}\int_\gamma p(z)\,\mathrm{d}z=0,
\end{equation}
where $\gamma$ is a positively oriented generator of $H_1(A(1,s),\mathbb Z)$. If this real period
is non-zero, then the lift is vertical-periodic and descends to a minimal annulus in
\[
\mathbb R^2\times \mathbb R/T\mathbb Z,
\qquad
T=2\operatorname{Re}\int_\gamma p(z)\,\mathrm{d}z.
\]

Second, suppose that $X=(h,w)$ is a minimal graph, or a vertical-periodic minimal multigraph,
and that its horizontal projection $h:A(1,s)\to A(1,S)$
is an orientation-preserving diffeomorphism. Let
\[
W=w\circ h^{-1}
\]
be the height over the horizontal annulus, interpreted on the universal cover in the periodic case.
Then the complex dilatation of the horizontal projection and the slope of the minimal graph satisfy
\begin{equation}\label{eq:slope-dilatation}
|\nabla W|\circ h=\frac{2\sqrt{|\omega_h|}}{1-|\omega_h|},
\end{equation}
and equivalently
\begin{equation}\label{eq:K-slope}
\sqrt{1+|\nabla W|^2}\circ h
=
\frac{1+|\omega_h|}{1-|\omega_h|}.
\end{equation}

Consequently, in the thickening range $S>s$, the planar threshold
\[
\kappa(s,S)=\frac{S-s}{S-s^{-1}}
\]
corresponds to the minimal-surface slope threshold
\begin{equation}\label{eq:L-kappa}
L_\kappa(s,S)=\frac{2\sqrt{\kappa(s,S)}}{1-\kappa(s,S)}.
\end{equation}

\subsection{The one-mode sign and the vertical period}

The one-mode maps show why the upper radial model is helicoidal rather than catenoidal. Consider
\[
h(z)=az+\frac{b}{\overline z},\qquad a,b\in\C.
\]
Then $f(z)=az$ and $g(z)=\overline b/z$, so
\[
f'(z)=a,\qquad g'(z)=-\frac{\overline b}{z^2},
\]
and the lift equation becomes
\begin{equation}\label{eq:one-mode-p}
p(z)^2=-f'(z)g'(z)=\frac{a\overline b}{z^2}.
\end{equation}
Thus $p(z)=C/z$, where $C^2=a\overline b$, and the vertical period is
\[
T=2\operatorname{Re}\int_{|z|=1}\frac{C}{z}\,\mathrm{d}z
  =2\operatorname{Re}(2\pi iC)
  =-4\pi\operatorname{Im}C.
\]

It follows immediately that a single-valued one-mode lift requires $C$ to be real, hence $a\overline b\in[0,\infty)$.
After a rotation of the target, the single-valued one-mode case has the form
\[
h(z)=A\left(z+\frac{q}{\overline z}\right),
\qquad A>0,\qquad 0\leq q<1.
\]
With the normalization
\[
|a+b|=1,
\qquad
\left|as+\frac{b}{s}\right|=S,
\]
this gives
\[
S=\frac{s+q/s}{1+q}\leq s.
\]
Thus the catenoidal sign does not produce the thickening case $S>s$.

By contrast, the upper radial model has the opposite sign,
\[
h(z)=A\left(z-\frac{q}{\overline z}\right),
\qquad A>0,\qquad q>0.
\]
Here $a\overline b=-A^2q<0$,
so $C=\pm iA\sqrt q$, and
\[
T=\mp 4\pi A\sqrt q\ne0.
\]
Hence the upper radial model is not a single-valued catenoidal annulus. It is helicoidal, or
vertical-periodic.

This sign distinction is the geometric reason why the planar counterexamples below affect the
vertical-periodic category, but do not automatically settle the zero-period single-valued graph
problem.

\subsection{The counterexamples as vertical-periodic minimal annuli}

The harmonic counterexamples from Theorem~\ref{thm:counterexamples} therefore have a natural
minimal-surface interpretation, but in the periodic category.

\begin{theorem}\label{thm:minimal-counter}
For every $1<s<S$, there exists a conformal minimal immersion
\[
\widetilde X:\widetilde{A(1,s)}\to\mathbb R^3
\]
on the universal cover of the annulus such that:
\begin{enumerate}[label=\textup{(\roman*)}]
\item its horizontal projection is a harmonic diffeomorphism
\[
h:A(1,s)\to A(1,S);
\]
\item it has non-zero vertical period;
\item it is a vertical-periodic minimal multigraph over $A(1,S)$; and
\item its maximal slope satisfies
\[
\sup |\nabla W|
<
\frac{2\sqrt{\kappa(s,S)}}{1-\kappa(s,S)}.
\]
\end{enumerate}
Thus the planar counterexamples disprove the corresponding upper slope principle in the
vertical-periodic helicoidal category.
\end{theorem}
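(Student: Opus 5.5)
The plan is to lift the planar counterexample $h$ of Theorem~\ref{thm:counterexamples} through the dictionary \eqref{eq:p-square}--\eqref{eq:slope-dilatation}. The one real task is to build a holomorphic square root $p$ of $-f'g'$ on the whole annulus; everything else is bookkeeping. Fix $t$ small and $m=m(t)$ as in \eqref{eq:m-choice}, and write $h=f+\overline g$.

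\emph{Step 1: set up the square root.} Write $-f'g'=-\omega_h\,(f')^2$. The proof of Theorem~\ref{thm:counterexamples} shows that $P=2zf'$ has no zeros on the closed annulus. Hence $f'$ is a nonvanishing holomorphic function that is its own square root of $(f')^2$. It therefore suffices to find a holomorphic branch of $\sqrt{-\omega_h}$ on $A(1,s)$.

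\emph{Step 2: control $z^2\omega_h$.} The first mode gives $\omega_0(z)=q_0(t)z^{-2}$ with $q_0(t)=\beta(t)/a(t)>0$. I would show that $z^2\omega_h$ is uniformly close to the positive constant $q_0(t)$ on the closed annulus:
\[
\sup_{1\le|z|\le s}\bigl|z^2\omega_h(z)-q_0(t)\bigr|\le C_{s,S}\,m(t)\,t\to0 .
\]
The estimate \eqref{eq:P-perturb-bound} already bounds $P-P_0$ by $C_{s,S}mt$ uniformly on the annulus. The identities
\[
\rho V_p'\pm pV_p=\frac{2p\rho^{\pm p}}{s^p-s^{-p}}
\]
bound the contributions of the non-first modes to $Q=\overline G$ in exactly the same way, so $G-G_0$ is also $O(mt)$ uniformly. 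Since $|P|\ge a(t)$, the quotient $G/P=\omega_h$ then differs from $G_0/P_0$ by $O(mt)$. Multiplying by $|z|^2\le s^2$ keeps this $O(mt)$, which tends to $0$.

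Consequently, for $t$ small, $z^2\omega_h$ takes values in a disc around $q_0(t)$ that avoids $(-\infty,0]$. It therefore has a holomorphic principal square root $\sqrt{z^2\omega_h}$ close to $\sqrt{q_0}$. Set
\[
p(z)=i\,\frac{f'(z)}{z}\sqrt{z^2\omega_h(z)} .
\]
Then $p^2=-\omega_h(f')^2=-f'g'$, which is \eqref{eq:p-square}. Note that $-\omega_h$ winds $-2$ times around $0$, an even number, which is why a global root exists. The resulting immersion $\widetilde X=(\operatorname{Re}h,\operatorname{Im}h,w)$ on the universal cover gives (i), since $h$ is the diffeomorphism from Theorem~\ref{thm:counterexamples}.

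\emph{Step 3: the vertical period, which I expect to be the main obstacle.} This is the only genuinely new estimate. Since $f'=a+O(mt)$, I would write
\[
p(z)=\frac{i\sqrt{a\beta}}{z}+O(mt)\quad\text{uniformly on }|z|=1 .
\]
Then
\[
T=2\operatorname{Re}\int_{|z|=1}p\,\mathrm{d}z=-4\pi\sqrt{a(t)\beta(t)}+O(mt).
\]
As $t\to0$ this tends to $-4\pi\sqrt{a(0)\beta(0)}\neq0$, because $\beta(0)=(S-s)/(s-s^{-1})>0$. Hence $T\ne0$, which gives (ii); this reproduces the helicoidal sign of the one-mode computation.

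\emph{Step 4: multigraph and slope.} Since $h$ is a diffeomorphism onto $A(1,S)$, the surface is a graph $W=w\circ h^{-1}$ over $A(1,S)$ on the universal cover, and it is vertical-periodic with period $T$; this is (iii). For (iv), the map $u\mapsto 2\sqrt u/(1-u)$ is strictly increasing on $[0,1)$. In addition, $|\omega_h|$ is continuous on the compact closed annulus with maximum $\|\omega_h\|_\infty<\kappa$. By \eqref{eq:slope-dilatation},
\[
\sup|\nabla W|=\frac{2\sqrt{\|\omega_h\|_\infty}}{1-\|\omega_h\|_\infty}<L_\kappa(s,S),
\]
which is the claimed strict inequality.
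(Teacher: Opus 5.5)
Your proposal is correct and follows essentially the same route as the paper: you perturb off the helicoidal radial model, show that $-f'g'$ has a global holomorphic square root uniformly close to $i\sqrt{a\beta}/z$, obtain $T\to-4\pi\sqrt{a_0\beta_0}\neq0$, and conclude with \eqref{eq:slope-dilatation} and monotonicity. The only difference is cosmetic. You write the branch explicitly as $p=i(f'/z)\sqrt{z^2\omega_h}$, using that $z^2\omega_h$ stays uniformly close to $q_0(t)>0$. The paper instead invokes the even-winding-number criterion for $-f_t'g_t'$ and chooses the branch continuously from $p_0$.
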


\begin{proof}
Let $h_t$ be the maps constructed in the proof of Theorem~\ref{thm:counterexamples}. As $t\to0$,
they converge in $C^1$ on the closed annulus to the upper radial one-mode map
\[
h_0(z)=a_0z-\frac{\beta_0}{\overline z},
\qquad
a_0=\frac{S-s^{-1}}{s-s^{-1}},
\qquad
\beta_0=\frac{S-s}{s-s^{-1}}>0.
\]
For this limiting map, $f_0(z)=a_0z$ and $g_0(z)=-\beta_0/z$. Hence
\[
p_0(z)^2=-f_0'(z)g_0'(z)=-\frac{a_0\beta_0}{z^2},
\qquad
p_0(z)=\frac{i\sqrt{a_0\beta_0}}{z}
\]
up to sign. Let $\gamma$ be any positively oriented circle $|z|=r_0$ with $1<r_0<s$. The corresponding
vertical period is
\[
T_0=2\operatorname{Re}\int_{\gamma}p_0(z)\,\mathrm{d}z
   =-4\pi\sqrt{a_0\beta_0}
\]
up to sign, and hence is non-zero.

We next check that the square-root condition persists under the perturbation. The estimates in
the proof of Theorem~\ref{thm:counterexamples} give, uniformly on the closed annulus,
\[
2zf_t'(z)=2a_0z+O(m(t)t).
\]
The same calculation applied to
$Q_t=\rho(h_t)_\rho+i(h_t)_\theta=\overline{2zg_t'(z)}$ gives
\[
2zg_t'(z)=\frac{2\beta_0}{z}+O(m(t)t).
\]
Since $a_0,\beta_0>0$ and $m(t)t\to0$, both $f_t'$ and $g_t'$ are non-vanishing on the closed
annulus for all sufficiently small $t$. Consequently $-f_t'g_t'$ is a non-vanishing holomorphic
function which is uniformly close to $-a_0\beta_0/z^2$ on the closed annulus. Its winding number
around a generator is therefore $-2$, the same as that of $-a_0\beta_0/z^2$. We use here the
standard criterion that a non-vanishing holomorphic function on an annulus has a holomorphic
square root if and only if its winding number along a generator is even. Hence $-f_t'g_t'$ admits a
holomorphic square root $p_t$ on $A(1,s)$. Choosing the branch continuously from $p_0$ gives
$p_t\to p_0$ uniformly on compact subannuli, in particular along $\gamma$. The associated minimal
lift has period
\[
T_t=2\operatorname{Re}\int_{\gamma}p_t(z)\,\mathrm{d}z,
\]
and $T_t\to T_0$. Thus $T_t\ne0$ for all sufficiently small $t$.

Finally, Theorem~\ref{thm:counterexamples} gives
\[
\|\omega_{h_t}\|_\infty<\kappa(s,S).
\]
Using \eqref{eq:slope-dilatation} and the monotonicity of
\[
q\mapsto \frac{2\sqrt q}{1-q}
\]
on $(0,1)$, we obtain
\[
\sup |\nabla W_t|
=
\frac{2\sqrt{\|\omega_{h_t}\|_\infty}}
     {1-\|\omega_{h_t}\|_\infty}
<
\frac{2\sqrt{\kappa(s,S)}}{1-\kappa(s,S)}.
\]
\end{proof}

\begin{remark}
The surfaces in Theorem~\ref{thm:minimal-counter} should not be interpreted as minimizing
surfaces. They show only that the radial helicoidal threshold is not sharp in the unrestricted
vertical-periodic category. The actual minimizer of $\|\omega_h\|_\infty$, equivalently the
minimizer of the maximal slope through \eqref{eq:slope-dilatation}, is expected to be a different
non-radial object.
\end{remark}

\subsection{Positive planar criteria as slope criteria}

The positive criteria in Section~\ref{sec4} immediately become lower bounds for the slope of
minimal graphs or vertical-periodic multigraphs.

\begin{corollary}\label{cor:minimal-positive}
Let $X=(h,w)$ be a minimal graph or a vertical-periodic minimal multigraph whose horizontal
projection
\[
h:A(1,s)\to A(1,S),
\qquad S>s,
\]
is a harmonic orientation-preserving diffeomorphism. Suppose that $h$ satisfies one of the
following hypotheses:
\begin{enumerate}[label=\textup{(\alph*)}]
\item $h$ is one-mode of the form covered by Lemma~\ref{lem:onemode};
\item $h$ satisfies the inner-boundary anti-conformal energy condition
      \eqref{eq:energy-condition};
\item $h$ satisfies the Fourier spectral condition \eqref{eq:spectral-condition}; or
\item $h$ is a sufficiently small low-frequency boundary reparametrization satisfying
      \eqref{eq:lowfreq-spectral-condition}.
\end{enumerate}
Then
\[
\sup |\nabla W|
\geq
\frac{2\sqrt{\kappa(s,S)}}{1-\kappa(s,S)}.
\]
Equivalently, if $\sup |\nabla W|\leq L$, then
\[
S\leq \frac{K_L+1}{2}s-\frac{K_L-1}{2s},
\qquad
K_L=\sqrt{1+L^2}.
\]
\end{corollary}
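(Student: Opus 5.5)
The plan is to reduce each of the four hypotheses to the single planar conclusion $\norm{\omega_h}_\infty\ge\kappa(s,S)$, and then convert this into a slope statement with the dictionary \eqref{eq:slope-dilatation}--\eqref{eq:K-slope}. The minimal-surface structure is used only through the projection $h$ and the pointwise identity \eqref{eq:slope-dilatation}.

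First we treat the planar step, case by case.
\begin{enumerate}[label=\textup{(\alph*)}]
\item Lemma~\ref{lem:onemode} gives $S\le (s-q/s)/(1-q)$ with $q=\norm{\omega_h}_\infty$. Since $S>s$, the equivalence used to derive \eqref{eq:kappa-bound} yields $q\ge\kappa$.
\item This is exactly Theorem~\ref{thm:energy}.
\item This is Theorem~\ref{thm:fourier-spectral}.
\item This is Theorem~\ref{thm:low-frequency}. It applies for all sufficiently small $t$, which is the meaning of ``sufficiently small'' in the hypothesis.
\end{enumerate}
In every case $\norm{\omega_h}_\infty\ge\kappa$.

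Second, we pass to slopes. By \eqref{eq:slope-dilatation}, $|\nabla W|\circ h=\psi(|\omega_h|)$ with $\psi(q)=2\sqrt q/(1-q)$. Since $h$ is a diffeomorphism onto $A(1,S)$, the supremum of $|\nabla W|$ over the target equals $\sup_{A(1,s)}\psi(|\omega_h|)$. The function $\psi$ is continuous and increasing on $[0,1)$, and $|\omega_h|<1$ pointwise because $h$ is orientation preserving. Hence
\[
\sup|\nabla W|=\psi\bigl(\norm{\omega_h}_\infty\bigr),
\]
with the convention $\psi(q)\to\infty$ as $q\to1$ if $\norm{\omega_h}_\infty=1$. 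Combined with the first step, this gives $\sup|\nabla W|\ge\psi(\kappa)=L_\kappa(s,S)$. Here $\norm{\omega_h}_\infty$ denotes the supremum over the open annulus, which is the quantity bounded in each cited result.

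For the equivalent formulation, suppose $\sup|\nabla W|\le L$. By \eqref{eq:K-slope}, pointwise
\[
\frac{1+|\omega_h|}{1-|\omega_h|}\le\sqrt{1+L^2}=K_L,
\]
so $|\omega_h|\le k_L=(K_L-1)/(K_L+1)$ and $\norm{\omega_h}_\infty\le k_L$. The first step gives $\kappa\le k_L$. By the chain of equivalences \eqref{eq:k-form-upper}--\eqref{eq:kappa-bound}, valid for $S>s$, this is exactly
\[
S\le\frac{s-k_L/s}{1-k_L}=\frac{K_L+1}{2}s-\frac{K_L-1}{2s}.
\]

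The only point needing care is the sup/monotonicity passage in the second step: sup commutes with the increasing function $\psi$, and we must use the global sup rather than a maximum attained in the interior. No step here is a real obstacle; the corollary is a bookkeeping consequence of the earlier theorems and the dictionary, which the paper treats as standard.
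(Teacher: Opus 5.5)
Your proposal is correct and follows the paper's proof: each hypothesis is reduced to $\|\omega_h\|_\infty\ge\kappa(s,S)$ via Lemma~\ref{lem:onemode} and Theorems~\ref{thm:energy}, \ref{thm:fourier-spectral}, \ref{thm:low-frequency}. The slope bound then follows from \eqref{eq:slope-dilatation} and monotonicity of $q\mapsto 2\sqrt q/(1-q)$, and the converse from \eqref{eq:K-slope} combined with rearranging $\kappa(s,S)\le k_L$. Your explicit handling of the supremum passing through the increasing function only spells out what the paper leaves implicit.
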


\begin{proof}
Each of the hypotheses listed above implies
\[
\|\omega_h\|_\infty\geq \kappa(s,S)
\]
by the corresponding planar result in Section~\ref{sec4}. The slope bound follows from
\eqref{eq:slope-dilatation}.

Conversely, if $\sup |\nabla W|\leq L$, then \eqref{eq:K-slope} gives
\[
\|\omega_h\|_\infty
\leq
\frac{K_L-1}{K_L+1},
\qquad
K_L=\sqrt{1+L^2}.
\]
Combining this with
\[
\kappa(s,S)\leq \|\omega_h\|_\infty
\]
and rearranging the definition of $\kappa(s,S)$ gives the stated upper estimate for $S$.
\end{proof}

\subsection{Local zero-period evidence}

The previous counterexamples are periodic. The zero-period condition for a single-valued minimal
graph removes the helicoidal extremal direction at the infinitesimal level. The following result is
local near the flat annulus and should not be read as a solution of the global single-valued problem.

Let \(D=A(1,S)\), and let \(u\in C^{2,\alpha}(\overline D)\) be a single-valued harmonic
function. For \(|\varepsilon|\) small, let \(W_\varepsilon\) be the solution of the minimal graph
equation on \(D\) with boundary values
\[
W_\varepsilon|_{\partial D}=\varepsilon u|_{\partial D}.
\]
Then
\[
W_\varepsilon=\varepsilon u+O(\varepsilon^3)
\quad\text{in }C^{2,\alpha}(\overline D).
\]
The absence of an $\varepsilon^2$ term follows from the oddness of the minimal graph equation
under $W\mapsto -W$, together with the standard implicit-function and Schauder theory for the
Dirichlet problem.
Let \(\log s_\varepsilon\) be the conformal modulus of the graph of \(W_\varepsilon\). In conformal
coordinates, the horizontal projection is a harmonic diffeomorphism $h_\varepsilon:A(1,s_\varepsilon)\to A(1,S)$.

Define
\begin{equation}\label{eq:Iu}
I(u)=\int_1^S\int_0^{2\pi}
\frac{u_\theta^2/r^2-u_r^2}{r}\,\mathrm{d}\theta\,\mathrm{d}r,
\end{equation}
and
\[
M(u)=\sup_{A(1,S)}|\nabla u|^2.
\]
Let \(F=2u_z\), and write
\[
F^2=\sum_{n\in\mathbb Z}\alpha_n z^n.
\]
Set $C_2(u):=-\operatorname{Re}\alpha_{-2}$.
Then
\[
I(u)=\pi(1-S^{-2})C_2(u).
\]
Thus the condition \(I(u)>0\) is equivalent to $C_2(u)>0$.

\begin{theorem}\label{thm:zero-period-perturbative}
Assume that $I(u)>0$. Then $s_\varepsilon<S$ for all sufficiently small positive $\varepsilon$, and
\[
\|\omega_{h_\varepsilon}\|_\infty>
\kappa(s_\varepsilon,S)
=
\frac{S-s_\varepsilon}{S-s_\varepsilon^{-1}}
\]
for all sufficiently small positive $\varepsilon$.
\end{theorem}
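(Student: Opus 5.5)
The plan is a second-order expansion in $\varepsilon$ of both sides of the inequality. Both $\|\omega_{h_\varepsilon}\|_\infty$ and $\kappa(s_\varepsilon,S)$ will turn out to be $O(\varepsilon^2)$, so the theorem reduces to comparing two explicit quadratic functionals of $u$.

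\emph{Step 1: the modulus.} Since $W_\varepsilon=\varepsilon u+O(\varepsilon^3)$ in $C^{2,\alpha}$, the induced metric on $D$ is
\[
|dz|^2+dW_\varepsilon^2=\lambda\,|dz+\mu\,d\bar z|^2,\qquad \mu=\varepsilon^2\,\overline{u_z}^{\,2}\cdot\bigl(1+O(\varepsilon^2)\bigr)
\]
(after the standard computation of the Beltrami coefficient of $|dz|^2+(W_z\,dz+W_{\bar z}\,d\bar z)^2$). I then apply the first-variation formula for the modulus of an annulus under a Beltrami perturbation. It pairs $\mu$ with the extremal quadratic differential $-dz^2/z^2$ of $A(1,S)$, giving
\[
\log s_\varepsilon=\log S-\frac{1}{\pi}\operatorname{Re}\int_{D}\mu\,\frac{\bar z^{\,2}}{|z|^4}\,dA+O(\varepsilon^4),
\]
with the sign to be fixed carefully. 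In polar form $2zu_z=ru_r-iu_\theta$, so $\operatorname{Re}(z^2F^2)=r^2u_r^2-u_\theta^2$. The integrand is therefore a positive multiple of $(u_\theta^2/r^2-u_r^2)/r$, and I expect
\[
\log S-\log s_\varepsilon=c\,\varepsilon^2 I(u)+O(\varepsilon^4),\qquad c>0 \text{ explicit (likely } c=1/(2\pi)).
\]
Since $I(u)>0$, this gives $s_\varepsilon<S$. Expanding the definition of $\kappa$ then gives
\[
\kappa(s_\varepsilon,S)=\frac{S(\log S-\log s_\varepsilon)}{S-S^{-1}}+O(\varepsilon^4)=\frac{cS\,\pi(1-S^{-2})}{S-S^{-1}}\,C_2(u)\,\varepsilon^2+O(\varepsilon^4).
\]

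\emph{Step 2: the dilatation.} By \eqref{eq:slope-dilatation}, $|\omega_{h_\varepsilon}|\circ h_\varepsilon^{-1}=\tfrac14|\nabla W_\varepsilon|^2+O(\varepsilon^4)$. More precisely, in the target coordinate $w$, the holomorphic quantity $\omega_{h_\varepsilon}\circ h_\varepsilon^{-1}$ equals $-\varepsilon^2 F(w)^2/4$ up to a unimodular factor and $O(\varepsilon^4)$ terms. This uses that $h_\varepsilon$ is $C^1$-close to a conformal map of $A(1,s_\varepsilon)$ onto $A(1,S)$, which is a rotation of $z\mapsto (S/s_\varepsilon)z$. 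Hence
\[
\|\omega_{h_\varepsilon}\|_\infty=\frac{\varepsilon^2}{4}\sup_{D}|F|^2+O(\varepsilon^4)=\frac{\varepsilon^2}{4}M(u)+O(\varepsilon^4).
\]
I will not use the whole sup, only a coefficient bound. On the circle $|w|=1$, the Laurent coefficient satisfies
\[
|\alpha_{-2}|=\Bigl|\frac1{2\pi}\int_0^{2\pi}F^2(e^{i\theta})e^{2i\theta}\,d\theta\Bigr|\le\sup_{|w|=1}|F|^2,
\]
so $\|\omega_{h_\varepsilon}\|_\infty\ge\tfrac{\varepsilon^2}{4}\,C_2(u)+O(\varepsilon^4)$.

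\emph{Step 3: comparison.} The theorem follows once
\[
\frac14>\frac{c\,\pi S(1-S^{-2})}{S-S^{-1}}=c\pi,
\]
that is, once $c<1/(4\pi)$, or whatever the correctly normalized constants turn out to be. Since $C_2(u)>0$, the strict inequality between the $\varepsilon^2$ coefficients then gives the claim for small $\varepsilon>0$. If the constants come out equal rather than strict, I will instead use the full $M(u)$. Strictness then comes from $\sup_{D}|F|^2\ge\sup_{|w|=1}|F|^2$ together with the fact that $|F|^2$ on $|w|=1$ cannot equal the constant $|\alpha_{-2}|$ unless $F^2=\alpha_{-2}w^{-2}$. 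That exceptional case is $u=c_0\theta$, which is not single-valued and is excluded.

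\emph{Main obstacle.} The delicate step is Step 1: getting the exact constant and sign in the variational formula for the conformal modulus, and justifying the $O(\varepsilon^4)$ remainder. For the remainder I would use uniform $C^{1,\alpha}$ control of the quasiconformal uniformization in terms of $\|\mu\|_{C^\alpha}$. I would first verify the constant on the explicit helicoid/radial example, where both sides are computable in closed form, before trusting it in general.
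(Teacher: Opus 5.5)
Your architecture is the paper's. You expand $\log s_\varepsilon$ to order $\varepsilon^2$, use the slope formula to get $\|\omega_{h_\varepsilon}\|_\infty=\frac{\varepsilon^2}{4}M(u)+O(\varepsilon^3)$, and compare through $\alpha_{-2}$.

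\textbf{The gap.} The one number everything hinges on, the constant $c$ in Step~1, is never determined, and your guess $c=1/(2\pi)$ is wrong. With that value Step~3 would fail outright ($c\pi=\tfrac12>\tfrac14$). Your fallback only handles equal constants, so it could not rescue this.

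\textbf{The sign.} Your displayed variational formula has the wrong sign. Take constant $\nu=\mu\bar z/z=\tau>0$ on the log-cylinder. The map $\zeta\mapsto\zeta+\tau\bar\zeta$ lengthens the cylinder and shortens its circumference, so the modulus increases. The correct first variation is therefore $\log s_\varepsilon=\log S+\frac1\pi\operatorname{Re}\int_D\mu\,\bar z^{2}|z|^{-4}\,dA+\cdots$.

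\textbf{The constant.} Since $\operatorname{Re}(\overline{u_z}^{\,2}\bar z^{2})=\frac14(r^2u_r^2-u_\theta^2)$, that integral equals $-\frac{\varepsilon^2}{4}I(u)+O(\varepsilon^4)$. So the integrand is a \emph{negative} multiple of $(u_\theta^2/r^2-u_r^2)/r$, and your two sign slips cancel. Your own formula, computed through, gives $c=1/(4\pi)$, not $1/(2\pi)$. The helicoid check you suggest confirms this: $\log s=\int_1^S dr/\sqrt{r^2+\varepsilon^2}=\log S-\frac{\varepsilon^2}{4}(1-S^{-2})+O(\varepsilon^4)$, and $I(\theta)=\pi(1-S^{-2})$.

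The paper gets constant and sign at once from the Dirichlet principle. Inserting $v_0=\log r/\log S$ into the expanded energy density $|\nabla v|^2+\varepsilon^2[\frac12|\nabla u|^2|\nabla v|^2-(\nabla u\cdot\nabla v)^2]$ gives $\operatorname{Cap}(D,g_\varepsilon)=\frac{2\pi}{L_0}+\frac{\varepsilon^2}{2L_0^2}I(u)+O(\varepsilon^3)$. Hence $\log s_\varepsilon=L_0-\frac{\varepsilon^2}{4\pi}I(u)+O(\varepsilon^3)$.

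\textbf{Consequence for Step~3.} With the correct constant, $\kappa(s_\varepsilon,S)=\frac{S^2}{S^2-1}\frac{I(u)}{4\pi}\varepsilon^2+O(\varepsilon^3)=\frac{C_2(u)}{4}\varepsilon^2+O(\varepsilon^3)$. This is \emph{exactly} your crude lower bound $\frac{\varepsilon^2}{4}C_2(u)$, so the primary form of Step~3 can never work. The comparison sits on the borderline, attained formally by the helicoidal height $u=\theta$, for which $C_2=M=1$. The whole content of the theorem is therefore the strict inequality $C_2(u)<M(u)$, i.e.\ $I(u)<\pi(1-S^{-2})M(u)$.

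\textbf{The fallback is correct.} It is precisely the paper's argument. Equality throughout $C_2\le|\alpha_{-2}|\le\frac1{2\pi}\int_0^{2\pi}|F(e^{i\theta})|^2\,d\theta\le M$ forces $z^2F^2\equiv\alpha_{-2}=-C_2(u)<0$. Then $F=C/z$ with $C$ purely imaginary, so $u$ is a nonzero multiple of $\theta$ plus a constant, which is not single-valued. The paper runs this in the other order: single-valuedness forces $C$ real, and then $I(u)=-\pi(1-S^{-2})C^2\le0$, contradicting $I(u)>0$.

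Once $c=1/(4\pi)$ is established, your argument coincides with the paper's proof.
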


\begin{proof}
Let \(L_0=\log S\). The graph metric over \(D=A(1,S)\) is
\[
g_\varepsilon
=
|dw|^2+dW_\varepsilon^2
=
g_{\mathrm{euc}}+\varepsilon^2\,du^2+O(\varepsilon^3).
\]
The conformal modulus is determined by capacity:
\[
\operatorname{Cap}(D,g_\varepsilon)=\frac{2\pi}{\log s_\varepsilon}.
\]
For the Euclidean annulus, the extremal function is
\[
v_0(r)=\frac{\log r}{L_0}.
\]
Expanding the Dirichlet integrand gives
\[
|\nabla_{g_\varepsilon}v|^2\,dA_{g_\varepsilon}
=
\left(
|\nabla v|^2
+
\varepsilon^2
\left[
\frac12|\nabla u|^2|\nabla v|^2
-
(\nabla u\cdot\nabla v)^2
\right]
\right)\mathrm{d}A
+
O(\varepsilon^3).
\]
Since \(v_0\) is the Euclidean minimizer, the first non-trivial variation of capacity is obtained by
inserting \(v_0\). Thus
\[
\operatorname{Cap}(D,g_\varepsilon)
=
\frac{2\pi}{L_0}
+
\frac{\varepsilon^2}{2L_0^2}I(u)
+
O(\varepsilon^3).
\]
Consequently,
\begin{equation}\label{eq:mod-expansion}
\log s_\varepsilon
=
L_0-\frac{\varepsilon^2}{4\pi}I(u)+O(\varepsilon^3).
\end{equation}
In particular, \(I(u)>0\) implies \(s_\varepsilon<S\) for all sufficiently small positive
\(\varepsilon\).

From \eqref{eq:mod-expansion},
\[
\kappa(s_\varepsilon,S)
=
\frac{S^2}{S^2-1}\frac{I(u)}{4\pi}\,\varepsilon^2
+
O(\varepsilon^3).
\]
On the other hand, the slope formula gives
\[
\|\omega_{h_\varepsilon}\|_\infty
=
\frac{\varepsilon^2}{4}M(u)
+
O(\varepsilon^3).
\]
It remains to compare \(I(u)\) and \(M(u)\).

Put $F=2u_z=u_x-iu_y$.
Then \(F\) is single-valued and holomorphic in \(D\), and
\[
|F|^2=|\nabla u|^2.
\]
In polar coordinates,
\[
F=e^{-i\theta}\left(u_r-\frac{i}{r}u_\theta\right),
\]
so
\[
\frac{u_\theta^2}{r^2}-u_r^2
=
-\operatorname{Re}\left(e^{2i\theta}F^2\right).
\]
Write the Laurent expansion
\[
F^2=\sum_{n\in\mathbb Z}\alpha_n z^n.
\]
Only the coefficient \(\alpha_{-2}\) survives after integration in \(\theta\), and hence
\begin{equation}\label{eq:I-cminus2}
I(u)
=
-\pi(1-S^{-2})\operatorname{Re}\alpha_{-2}.
\end{equation}
Moreover,
\[
\alpha_{-2}
=
\frac{1}{2\pi i}\int_{|z|=1}F(z)^2z\,\mathrm{d}z
=
\frac1{2\pi}\int_0^{2\pi}
e^{2i\theta}F(e^{i\theta})^2\,\mathrm{d}\theta.
\]
Therefore
\[
|\alpha_{-2}|
\leq
\frac1{2\pi}\int_0^{2\pi}|F(e^{i\theta})|^2\,\mathrm{d}\theta
\leq
M(u),
\]
and so
\begin{equation}\label{eq:I-M}
I(u)\leq \pi(1-S^{-2})M(u).
\end{equation}

For \(I(u)>0\), equality in \eqref{eq:I-M} is impossible. Indeed, equality in
\[
I(u)\leq \pi(1-S^{-2})|\alpha_{-2}|
\leq \pi(1-S^{-2})M(u)
\]
would force equality in both inequalities. Hence $e^{2i\theta}F(e^{i\theta})^2$
has constant argument on \(|z|=1\), and $|F(e^{i\theta})|^2=M(u)$
there. Therefore \(e^{i\theta}F(e^{i\theta})\) is constant on \(|z|=1\), say
$e^{i\theta}F(e^{i\theta})=C$.
Thus, by uniqueness of Laurent coefficients,
\[
F(z)=\frac{C}{z}.
\]
Consequently,
\[
u=\operatorname{Re}(C\log z)+\mathrm{constant}.
\]
Since \(u\) is single-valued, \(C\) must be real. But then
$\alpha_{-2}=C^2\geq0$,
and so
\[
I(u)=-\pi(1-S^{-2})C^2\leq0,
\]
contradicting \(I(u)>0\). Hence the inequality in \eqref{eq:I-M} is strict whenever \(I(u)>0\).

The strict form of \eqref{eq:I-M} gives
\[
\frac{S^2}{S^2-1}\frac{I(u)}{4\pi}
<
\frac14M(u).
\]
Comparing the two expansions above yields
\[
\kappa(s_\varepsilon,S)
<
\|\omega_{h_\varepsilon}\|_\infty
\]
for all sufficiently small positive \(\varepsilon\).
\end{proof}

\subsection{The remaining single-valued problem}

The counterexamples above are vertical-periodic. The corresponding zero-period, single-valued
minimal-graph problem remains separate: Theorem~\ref{thm:minimal-counter} does not answer it,
because its height function has non-zero vertical period.

\begin{problem}\label{prob:zero-period}
Let \(W:A(1,S)\to\mathbb R\) be a single-valued minimal graph with
\[
\sup_{A(1,S)}|\nabla W|\le L,
\]
and let \(\log s\) be the conformal modulus of its graph. If \(S>s\), must
\[
S\le \frac{K+1}{2}s-\frac{K-1}{2s},
\qquad K=\sqrt{1+L^2},
\]
or equivalently
\[
\log s\ge \int_1^S\frac{\mathrm{d}r}{\sqrt{r^2+L^2}},
\]
hold?
\end{problem}

The comparison metric is the helicoidal metric
\[
dr^2+(r^2+L^2)\mathrm{d}\theta^2,
\]
whose extremal height is \(W=L\theta\). Since this height has non-zero period, it is not
single-valued. Thus the question is whether the zero-period condition rules out the helicoidal
escape mechanism globally.

Equivalently, in conformal parameters, the problem asks for the same sharp bound for harmonic
annulus diffeomorphisms
\[
F:A(1,s)\to A(1,S),\qquad K_F\le K,
\]
under the additional minimal-graph condition
\[
\Phi_F=F_z\overline{F_{\bar z}}\,dz^2=-\varphi^2,
\]
where \(\varphi\) is a holomorphic one-form with zero real periods. This zero-period condition is
exactly the single-valuedness of the height function.

\section{Concluding interpretation}\label{sec:conclusion}

The results of this paper show that the upper Nitsche--Gr\"otzsch principle proposed in
Conjecture~\ref{conj:iko} is a genuinely one-mode phenomenon. The radial model, the
spiral-radial model, and the principal harmonic maps in \cite{IKO2012} all lead to the
same sharp quantity
\[
\frac{K+1}{2}s-\frac{K-1}{2s}.
\]
However, Theorem~\ref{thm:counterexamples} shows that this quantity does not control
arbitrary non-radial harmonic annulus diffeomorphisms.

The mechanism of failure is frequency filtering in the annulus. A Fourier mode of order
\(n\) prescribed on the outer boundary reaches the inner boundary with size comparable to
\[
\frac{2|n|}{s^{|n|}-s^{-|n|}}\sim 2|n|s^{-|n|}.
\]
Thus, high frequencies are exponentially damped at the inner boundary. A small
high-frequency angular reparametrization can lower the first Fourier-mode dilatation by
order \(t^2\), while the compensating non-first modes remain \(o(t^2)\) there when
\(m\asymp \log(1/t)\). This is why the radial dilatation threshold can be beaten in the
full non-radial class.

The positive results in Section~\ref{sec4} identify the regimes in which this escape is
unavailable. If enough anti-conformal energy is visible on the inner boundary, or
equivalently if the non-first Fourier modes have sufficient inner-boundary leakage, then
the one-mode estimate survives. In particular, low-frequency perturbations satisfying the
spectral condition remain governed by the conjectured bound.

The minimal-surface interpretation separates two different geometric problems. The upper
radial model has helicoidal sign and carries non-zero vertical period, whereas the
catenoidal sign is compatible with a single-valued one-mode minimal annulus. Consequently,
the counterexamples disprove both the planar harmonic conjecture and the corresponding
vertical-periodic helicoidal slope principle, but they do not by themselves disprove the
zero-period problem for single-valued minimal graphs.

A further consequence is that the radial candidate is not extremal for the unrestricted
harmonic quasiconformal problem. In the notation of
Corollary~\ref{cor:nonradial-extremal-value},
\[
q_*(s,S)<\kappa(s,S)=\frac{S-s}{S-s^{-1}},
\]
whereas the radial helicoidal model has dilatation exactly \(\kappa(s,S)\). Thus the
natural replacement for Conjecture~\ref{conj:iko} is not a corrected radial estimate, but
the following non-radial extremal problem.

\begin{problem}\label{prob:extremal}
For \(1<s<S\), determine
\[
q_*(s,S)=\inf_h \|\omega_h\|_\infty,
\]
where the infimum is taken over all harmonic orientation-preserving quasiconformal
diffeomorphisms \(h:A(1,s)\to A(1,S)\). Describe the extremal maps, if they exist. Are
they unique up to rotations of the source and target? What are their boundary
reparametrizations, and what is the geometry of their minimal-surface lifts?
\end{problem}

The high-frequency examples constructed here show only that the radial value is not
optimal.

\appendix
\section{Numerical details for the explicit \texorpdfstring{$(2,3)$}{(2,3)} counterexample}\label{Apx}

For completeness we record the estimates leading to the explicit bound
\[
\norm{\omega_h}_{\infty}<0.39993<\frac25
\]
when $s=2$, $S=3$, $t=1/50$, and $m=20$.

The outer boundary is
\[
h(2e^{i\theta})=3e^{i(\theta+t\sin20\theta)}=
\sum_{j\in\mathbb{Z}}c_je^{in_j\theta},
\]
where
\[
c_j=3J_j(t),
\qquad
n_j=1+20j,
\qquad
p_j=|n_j|.
\]
The first-mode coefficients are
\[
a=\frac{3J_0(t)-1/2}{3/2},
\qquad
b=\frac{2-3J_0(t)}{3/2},
\qquad
\beta=-b.
\]
Since
\[
J_0(t)=1-\frac{t^2}{4}+\frac{t^4}{64}-\cdots,
\qquad
\text{where } t=\frac{1}{50},
\]
we have
\[
1-10^{-4}<J_0(t)<1-10^{-4}+\frac14\cdot10^{-8},
\]
and hence
\[
a>1.66646666,
\qquad
\beta<0.666466672.
\]
The quantities $R_1$, $R_2^P$, and $R_2^Q$ below are the inner-boundary perturbation bound,
the outer-boundary $P$-perturbation bound, and the outer-boundary $Q$-perturbation bound from
the proof of Theorem~\ref{thm:counterexamples}, specialized to these parameters.
For $j\geq1$,
\[
|J_j(t)|\leq e^{10^{-4}}\frac{10^{-2j}}{j!},
\qquad
|J_{-j}(t)|=|J_j(t)|.
\]
At the inner boundary,
\[
R_1=\sum_{j\neq0}|c_j|\frac{2p_j}{2^{p_j}-2^{-p_j}}<2.78\cdot10^{-6}.
\]
At the outer boundary, with
\[
\lambda_p=p\frac{2^p+2^{-p}}{2^p-2^{-p}},
\]
one obtains
\[
R_2^P=\sum_{j\neq0}|c_j|\,|\lambda_{p_j}+n_j|<1.273,
\qquad
R_2^Q=\sum_{j\neq0}|c_j|\,|\lambda_{p_j}-n_j|<1.152.
\]
The same estimate gives an interior bound for the non-first contribution to $P=2zh_z$:
\[
\sum_{j\neq0}|c_j|\sup_{1\leq r\leq2}
\left|rV_{p_j}'(r)+n_jV_{p_j}(r)\right|<1.273.
\]
Therefore
\[
|P|\geq 2a-1.273>2
\]
on the closed annulus, and thus $\omega_h$ is holomorphic. On $|z|=1$,
\[
|\omega_h|\leq \frac{2\beta+R_1}{2a-R_1}<0.39993,
\]
while on $|z|=2$,
\[
|\omega_h|\leq \frac{\beta+R_2^Q}{4a-R_2^P}<0.338.
\]
The maximum principle gives the desired numerical estimate.

\medskip

\noindent\textbf{Acknowledgements.} We are grateful to Leonid Kovalev for his careful reading of an earlier version, for his encouraging
comments on the counterexample, and for pointing out the natural extremal problem for harmonic
quasiconformal diffeomorphisms between circular annuli.

\medskip

\noindent\textbf{Funding.} The first author gratefully acknowledges financial support from the Ministry of Education, Science and Innovation of Montenegro through the grants \emph{``Mathematical Analysis, Optimisation and Machine Learning''} and \emph{``Complex-analytic and geometric techniques for non-Euclidean machine learning: theory and applications.''} The second author was supported by National Key R\&D Program of China (Grant No. 2021YFA1003100) and NSF of Guangdong Province (Grant No. 2025A1515011213). The third author was supported by NSF of China (No. 12271189, 12671096), NSF of Guangdong Province (Grant No. 2024A1515010467, 2026A1515012333), STU Scientific Research Initiation Grant NTF25017T, HQU teaching reform project HQJGKT2411, and Fujian Alliance of Mathematics (Grant No. 2023SXLMMS07).

\medskip

\noindent\textbf{Data availability.} The authors declare that this research is purely theoretical and does not involve any data.

\medskip

\noindent\textbf{Conflicts of interest.} The authors declare that they have no conflicts of interest regarding the publication of this paper.

\end{document}